\numberwithin{equation}{section}
\newcounter{dummy} \numberwithin{dummy}{section}
\theoremstyle{definition} 
\newtheorem{theorem}[dummy]{Theorem}
\newtheorem{definition}[dummy]{Definition}
\newtheorem{lemma}[dummy]{Lemma}
\newtheorem{example}[dummy]{Example}
\newtheorem{remark}[dummy]{Remark}
\newtheorem{proposition}[dummy]{Proposition}
\newcommand{\unif}[1]{{\rm Uni}_{#1}}
\newcommand{\PP}{\mathbb{P}}
\newcommand{\TV}{\mathrm{TV}}
\newcommand{\PF}[1]{\mathrm{PF}_{#1}}
\title{Probabilistic $(m,n)$-Parking Functions}
\author[Harris]{Pamela E. Harris}\thanks{P. E.~Harris was supported in part by an award from the Simons Travel Support for Mathematicians program.}
\address[P. E.~Harris]{Department of Mathematical Sciences, University of Wisconsin, Milwaukee, WI, U.S.A.}
\email{\textcolor{blue}{\href{mailto:peharris@uwm.edu}{peharris@uwm.edu}}}
\author[Ribeiro]{Rodrigo Ribeiro}
\author[M. Yin]{Mei Yin}\thanks{M.~Yin was supported in part by the University of Denver's Professional Research Opportunities for Faculty Fund 80369-145601 and an award from the Simons Travel Support for Mathematicians program.}
\address[R. Ribeiro and M. Yin]{Department of Mathematics, University of Denver, Denver, CO, U.S.A.}
\email{\textcolor{blue}{\href{mailto:rodrigo.ribeiro@du.edu}{rodrigo.ribeiro@du.edu}}, \textcolor{blue}{\href{mailto:mei.yin@du.edu}{mei.yin@du.edu}}}
\begin{document}
\begin{abstract}
In this article, we establish new results on the probabilistic parking model (introduced by Durm\'ic, Han, Harris, Ribeiro, and Yin) with $m$ cars and $n$ parking spots and probability parameter $p\in[0,1]$. For any $ m \leq n$ and $p \in [0,1]$, we study the parking preference of the last car, denoted $a_m$, and determine the conditional distribution of $a_m$ and compute its expected value. We show that both formulas depict explicit dependence on the probability parameter $p$.
We study the case where $m = cn $ for some $ 0 < c < 1 $ and investigate the asymptotic behavior and show that the presence of ``extra spots'' on the street significantly affects the rate at which the conditional distribution of $ a_m $ converges to the uniform distribution on $[n]$. 
Even for small $ \varepsilon = 1 - c $, an $ \varepsilon $-proportion of extra spots reduces the convergence rate from $ 1/\sqrt{n} $ to $ 1/n $ when $ p \neq 1/2 $.
Additionally, we examine how the convergence rate depends on $c$, while keeping $n$ and $p$ fixed. We establish that as $c$ approaches zero, the total variation distance between the conditional distribution of $a_m$ and the uniform distribution on $[n]$ decreases at least linearly in $c$. 
\end{abstract}
\maketitle

\section{Introduction}\label{intro}
A parking function of length $n\in\mathbb{N}=\{1,2,3,\ldots\}$ is a tuple $(a_1,a_2,\ldots,a_n)\in[n]^n$, where $[n]=\{1,2,\ldots,n\}$, such that its nondecreasing rearrangement $(b_1,b_2,\ldots,b_n)$ satisfies $b_i\leq i$ for all $i\in n$. As their name indicates, parking functions, introduced in the literature by Konheim and Weiss \cite{konheimWeiss}, can also be described via the following parking process: 
There are $n$ cars in queue to enter a one-way street with $n$ parking spots numbered in sequence from $1$ to $n$. 
Car $i$ has a parking preference $a_i\in[n]$, and we collect these preferences in a \emph{preference list} $\alpha=(a_1,a_2,\ldots,a_n)\in[n]^n$. 
Each car enters the street and proceeds to their preferred parking spot, parking there if the spot is unoccupied. 
If a car finds their preferred spot occupied, then they continue down the street parking in the next available spot, if such a spot exists. 
Given the preference list $\alpha$, if all cars are able to park within the $n$ parking spots on the street under this classical parking protocol, then we say that the preference list $\alpha$ is a 
\emph{parking function} of length $n$. 
We let $\PF{n}$ denote the set of all parking functions of length $n$ 
and recall that $|\PF{n}|=(n+1)^{n-1}$, for a proof see \cite{konheimWeiss}.

There are many generalizations of parking functions, and they often arise via changes to the cars, to their preferences, or to the parking protocol involved. 
Such generalizations include having more spots than cars, cars having varying lengths (parking sequences and assortments), cars having an interval or subset of preferences (interval and subset parking functions), cars being able to first seek for parking backwards when finding their preference occupied (vacillating and $k$-Naples parking functions), or cars being bumped out of their preference by a later car in the queue (MVP parking functions) \cite{aguilarfraga2023interval,armstrong2016rational, 
BrandtUnitInterval,
kNaplesthroughCatalan,
ChenAssortments,
bib:NaplesPF, colaric2020interval,
countingKnaples, fang2024vacillatingparkingfunctions,franks2023counting,MVP,SpiroSubset}. 
Parking functions are also well-connected to numerous areas of mathematics and make appearances in the study of: diagonal harmonics, ideal states in the Tower of Hanoi game, the \texttt{QuickSort} algorithm, Boolean intervals in the weak order on the symmetric group, facets of the permutohedron, and computing volumes of flow polytopes 
\cite{Aguillon2022OnPF,
Caracol,
MR3724106,
elder2025parking,
fang2024vacillatingparkingfunctions,
garcia2024defectiveparkingfunctionsyoung,
MR3551576,
quicksort,
Harris2023LuckyCA,
HicksDiagonalHarmonics,
unit_perm,
PitmanStanley,
MR1902680}. 
We point the interested reader to the work of Yan \cite{yan2015parking} providing a survey of parking function results, and that of Carlson, Christensen, Harris, Jones, and Ramos Rodr\'iguez \cite{carlson2020parking} with many open problems related to parking functions.

Although parking functions and their generalizations are well-studied, there has been less work done on considering probabilistic parking protocols, see \cite{DHHRY,Amanda,Trevino}. 
In \cite{DHHRY}, we investigate the following probabilistic scenario for parking $n$ cars on a one-way street with $n$ spots: Fix $p \in [0, 1]$ and consider a coin which flips to heads with probability $p$ and tails with probability $1-p$. 
Our probabilistic parking protocol proceeds as follows: If a car arrives at its preferred spot and finds it unoccupied it parks there. If instead the spot is occupied, then the driver tosses the biased coin. If the coin lands on heads, with probability $p$, the driver continues moving forward in the street. However, if the coin lands on tails, with probability $1-p$, the car moves backwards and tries to find an unoccupied parking spot.
With this probabilistic parking function protocol, we determined the likelihood that a preference
list $\alpha \in [n]^n$ is a parking function. 
We also explored the properties of parking functions $\alpha$, demonstrating the effect of the parameter $p$ on the parking protocol.

Motivated by that initial work \cite{DHHRY}, the following natural questions arise: \emph{What if we are parking $m \leq n$ cars on a one-way street with $n$ spots? How do parking statistics depend on the probabilistic parameter $p$ in this more general case? What new implications arise based on the change to the number of cars?}  
We answer these questions here and we summarize our finds below, but first we set some notation and make some initial remarks.

For $m\leq n$, it is customary to let $\PF{m,n}$ denote the set of $(m,n)$-parking functions which are the preference lists $(a_1,a_2,\ldots,a_m)\in[n]^m$ for $m$ cars parking on a one-way street with $n$ parking spots, that under the classical parking protocol allow all cars to park. 
Konheim and Weiss \cite{konheimWeiss} establish that $|\PF{m,n}|=(n-m+1)(n+1)^{m-1}$.
Due to the probabilistic nature of our parking model, in our setting, all $\alpha \in [n]^m$ have a positive probability of allowing all of the cars to park, and hence, of being an $(m,n)$-parking function. 
Thus, throughout we write $\alpha \in \PF{m, n}$ to depict the situation that $m$ cars with preference list $\alpha$ park on $n$ spots. 
In \cite[Theorem 2]{DHHRY}, using circular symmetry ideas, we establish that the likelihood that a preference list $\alpha \in [n]^m$ is a parking function does not depend on $p$, which is why the parameter does not arise in the notation $\PF{m,n}$.
Parking $m \leq n$ cars instead of $n$ cars on $n$ spots opens up many new possibilities, and as such, the results in this paper and their proofs are more complicated than the special case of $m=n$ which we explored in \cite{DHHRY}. 
Hence,  our contributions delve deeper and advance our understanding of the implications of the probabilistic model.

We now give an overview of our results. 
\begin{enumerate}
\item As in \cite{DHHRY}, we focus on the parking preference of the last car $a_m$ and explore its statistical properties. 
In Theorem \ref{component} we give the conditional distribution of $a_m$, in Theorem \ref{mean} we calculate its expected value, and in Theorem \ref{thm:tvd} we study the rate of convergence of the conditional distribution of $a_m$ to the uniform distribution on $[n]$. 
These results are valid for any $m\leq n$ and $p \in [0, 1]$. 

\item An important feature of the probabilistic parking model is the \emph{parking symmetry}: Having $m$ cars enter the street from left to right with preference list $\alpha=(a_1, \dots, a_m)$ parking under protocol with parameter $p$ depicts the same scenario as having $m$ cars enter the street from right to left with preference list $\alpha'=(n+1-a_1, \dots, n+1-a_m)$ parking under protocol with parameter $1-p$. 
This feature plays a key role in our results for $a_m$ (cf. Remarks \ref{rmk:symmetry} and \ref{rmk:symmetry2}). 

\item In Propositions \ref{cor-asymp} and \ref{cor:mcn}, we specialize to the situation when $m=cn$ for some $0<c<1$ and study the asymptotics. The striking effect of the ``extra spots'' on the rate of convergence of the conditional distribution of $a_m$ to the uniform distribution on $[n]$ is quite evident. Even for small $\varepsilon=1-c$, the presence of an $\varepsilon$ proportion of extra spots decreases the order of the rate of converge to the uniform distribution when $p \neq 1/2$ from $1/\sqrt{n}$ to $1/n$ (cf. \cite[Theorem 6]{DHHRY}). The sharp contrasting behaviors are reflected in Remark \ref{blow} and more visually in Figure \ref{fig:hist}. 
Proposition \ref{prop:c} takes a different perspective and analyzes the convergence rate as a function of $c$, when $n$ and $p$ are treated as fixed parameters. We show that as a function of $c$, the total variation distance between the conditional distribution of $a_m$ and the uniform distribution on $[n]$ decreases to zero at least linearly in $c$, when $c$ goes to zero (cf. Figure \ref{fig:enter-label}).
\end{enumerate}


\section{Key combinatorial and probabilistic results}\label{key}
In this section, we present some combinatorial and probabilistic results that are used in the convergence rate analysis in Section \ref{conv}. Our investigations in this section rely on a combinatorial construction which we term a \emph{parking function multi-shuffle} and \emph{Abel's multinomial theorem}. These concepts were first discussed in Diaconis and Hicks \cite{DH} and later extended in Kenyon and Yin \cite{KY} and further in Yin \cite{Yin}. Some asymptotic expansion formulas also prove useful. We provide some background on these concepts first before diving into our main formulas.

\subsection{Background concepts} \label{sec:background}
Our first result is quite magical. In \cite{DHHRY}, it was established that for the probabilistic one-way parking situation involving $m$ cars and $n$ spots, the probabilities of being a parking function, over the set of all preference lists, add up in a way so that the dependence on $p$ is canceled and there is invariance to the forward probability $p$ for the randomly selected list. For ease of reference, we restate the result below.

\begin{theorem}[Theorem 2 in \cite{DHHRY}]
\label{thm:old}
Consider the preference list $\alpha \in [n]^m$, chosen uniformly at random. Then
\begin{equation}\label{eqn:old}
\PP(\alpha \in \PF{m, n}~|~\alpha \in [n]^m) = \frac{(n-m+1)(n+1)^{m-1}}{n^m}.
\end{equation}
\end{theorem}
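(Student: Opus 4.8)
The plan is to run a Pollak-type symmetrization argument adapted to the bidirectional probabilistic protocol. Work on a cycle of $n+1$ positions, identified with $\mathbb{Z}_{n+1}=\{0,1,\dots,n\}$, where positions $1,\dots,n$ play the role of the street and position $0$ is an auxiliary spot. Given a preference list $\beta\in\mathbb{Z}_{n+1}^m$ and a realization of the coin flips, run the analogous protocol on the cycle: a car goes to its preferred position; if that position is occupied it tosses the biased coin and moves clockwise on heads (with probability $p$) and counterclockwise on tails, stopping at the first empty position it encounters. Since $m\le n<n+1$, there is always an empty position, so every car parks, and consequently exactly $n+1-m$ positions are empty at the end. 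Write $E(\beta)$ for this (random) set of empty positions.

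First I would prove the key dictionary: for $\alpha\in[n]^m$ (viewed inside $\mathbb{Z}_{n+1}^m$ via $[n]=\{1,\dots,n\}$) and any fixed coin realization, all $m$ cars park on the street under the linear protocol if and only if position $0$ is empty at the end of the cyclic run. The engine here is that occupancy is monotone in time: once a position is occupied it stays occupied. If no car ever touches position $0$ during its search, the cyclic run is literally the linear run on $\{1,\dots,n\}$, so all cars park on the street; conversely, if some car does reach position $0$ (because a forward search overshot $n$, or a backward search underflowed $1$, or a car preferred $0$ --- impossible since $\alpha\in[n]^m$), then position $0$ becomes occupied and stays occupied, hence it is not empty at the end. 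Taking expectations over the coins, this yields $\PP(\alpha\in\PF{m,n})=\PP(0\in E(\alpha))$ for every $\alpha\in[n]^m$.

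Next I would dispose of the remaining lists: if some coordinate of $\beta$ equals $0$, the first car preferring $0$ either parks there or finds it already occupied, so in all cases position $0$ is occupied at the end; hence $\PP(0\in E(\beta))=0$ whenever $0$ appears in $\beta$. Therefore $\sum_{\beta\in\mathbb{Z}_{n+1}^m}\PP(0\in E(\beta))=\sum_{\alpha\in[n]^m}\PP(0\in E(\alpha))$, which by the dictionary equals $\sum_{\alpha\in[n]^m}\PP(\alpha\in\PF{m,n})$. Finally I would evaluate the left-hand side by rotation equivariance: for a fixed coin realization the shift $\beta\mapsto\beta+k$ is a bijection of $\mathbb{Z}_{n+1}^m$ satisfying $E(\beta+k)=E(\beta)+k$, so $\sum_{\beta}\PP(j\in E(\beta))$ does not depend on $j\in\mathbb{Z}_{n+1}$; summing over the $n+1$ values of $j$ gives $\sum_{\beta}\mathbb{E}\,|E(\beta)|=(n+1)^m(n+1-m)$, whence $\sum_{\beta}\PP(0\in E(\beta))=(n+1)^{m-1}(n+1-m)$. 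Since a uniform $\alpha\in[n]^m$ contributes weight $1/n^m$ to each term, dividing by $n^m$ gives the stated formula with $n+1-m=n-m+1$.

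The step I expect to be the main obstacle is the dictionary lemma: one has to verify carefully that the bidirectional search interacts correctly with \emph{both} ends of the street and that ``position $0$ is never touched'' is genuinely equivalent to ``all cars park on the line'' for each individual coin realization, not merely in distribution. The monotonicity-of-occupancy observation is what makes this work, but it needs to be stated precisely, with the case analysis separating a car that would overshoot spot $n$ from one that would underflow spot $1$. Everything after that --- the reduction of the sum to $[n]^m$ and the rotation count --- is then routine.
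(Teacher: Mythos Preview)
The paper does not give its own proof of this statement: it is quoted verbatim as Theorem~2 of \cite{DHHRY} and merely restated ``for ease of reference,'' with the remark in the introduction that it was established ``using circular symmetry ideas.'' Your Pollak-type cycle argument is precisely that circular-symmetry approach, so there is nothing to compare against in this paper beyond confirming that the method matches what is alluded to.

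Your argument is sound. The dictionary lemma is correct as you have sketched it: monotonicity of occupancy ensures that if position $0$ is empty at the end of the cyclic run then it was empty throughout, so any car reaching it would have parked there; hence no car reaches it and the cyclic run coincides with the linear one. The converse direction follows by induction on the cars, as you indicate. The reduction $\sum_{\beta\in\mathbb{Z}_{n+1}^m}\PP(0\in E(\beta))=\sum_{\alpha\in[n]^m}\PP(0\in E(\alpha))$ and the rotation-equivariance count $\sum_\beta \mathbb{E}|E(\beta)|=(n+1)^m(n+1-m)$ are routine and correctly executed. The only cosmetic point is that you might state explicitly that in this model the coin is flipped \emph{once} per car (determining a single direction of travel), so that for a fixed coin realization the map $\beta\mapsto E(\beta)$ is deterministic and the shift-equivariance $E(\beta+k)=E(\beta)+k$ is immediate; you use this implicitly and it is fine.
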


We now illustrate the concept of a parking function multi-shuffle through an example. The multi-shuffle construction allows us to compute the number of parking functions $\PF{m, n}$ where the parking preferences of $\ell\leq m$ cars are arbitrarily specified. 
Alternatively, we can think that $\ell$ spots are already taken along a one-way street with $n$ parking spots, and we want to count the possible preferences for the remaining $m-\ell$ cars so that they can all successfully park. 
In the parking function literature, the set of successful preference sequences of the $m-\ell$ cars that enter the street later is referred to as \emph{parking completions} for $\tau=(\tau_1, \dots, \tau_\ell)$ where the entries of $\tau$ denote the $\ell$ spots that are taken previously, arranged in increasing order. This combinatorial construction plays a key role in the proof of Theorem \ref{component}. For more on parking completions we point the interested reader to \cite{bib:ParkingCompletions} and \cite{Yin2}.

\begin{definition}\label{shuffle}
Take $n-m\leq \ell \leq n$ any integer. Let $k=(k_1, \dots, k_\ell) \in [n]^\ell$ be in (strictly) increasing order. 
We say that $a_1, \dots, a_{n-\ell}$ is a \emph{parking function multi-shuffle} of $\ell+1$ parking functions $\alpha_1 \in \PF{k_1-1, k_1-1}, \alpha_2 \in \PF{k_2-k_1-1, k_2-k_1-1}, \dots, \alpha_\ell \in \PF{k_\ell-k_{\ell-1}-1, k_\ell-k_{\ell-1}-1}$, and $\alpha_{\ell+1} \in \PF{n-k_\ell, n-k_\ell}$ if $a_1, \dots, a_{n-\ell}$ is any permutation of the union of the $\ell+1$ words $\alpha_1, \alpha_2+(k_1, \dots, k_1), \dots, \alpha_{\ell+1}+(k_\ell, \dots, k_\ell)$ (some words might be empty), where we allow for permuting the entries in each word.
\end{definition}

\begin{example}
Take $m=8$, $n=10$, $k_1=4$, $k_2=5$, $k_3=6$, and $k_4=8$. Take $\alpha_1=(2, 1, 2) \in \PF{3, 3}$, $\alpha_2=\emptyset$, $\alpha_3=\emptyset$, $\alpha_4=(1) \in \PF{1, 1}$, and $\alpha_5=(2, 1) \in \PF{2, 2}$. Then $(2, \overline{7},2, \underline{9}, \underline{10},1)$ is a multi-shuffle of the five words $(2, 1, 2)$, $\emptyset$, $\emptyset$, $(7)$, and $(10, 9)$. Note that the entries of some of the words are also permuted within.
\end{example}

Next we present a transcription of the famous generalization of the multinomial theorem as introduced by Abel. The following Abel's multinomial theorem  plays an essential role in our theoretical derivations later.

\begin{theorem}[Abel's multinomial theorem, derived from Pitman \cite{Pitman} and Riordan \cite{Riordan}]\label{Abel}
Let
\begin{equation}\label{b}
A_n(x_1, \dots, x_m; p_1, \dots, p_m)=\sum \binom{n}{s} \prod_{j=1}^m (x_j+s_j)^{s_j+p_j},
\end{equation}
where $s=(s_1, \dots, s_m)$ and $\sum_{i=1}^m s_i=n$.
Then
\begin{multline}\label{b1}
A_n(x_1, \dots, x_i, \dots, x_j, \dots, x_m; p_1, \dots, p_i, \dots, p_j, \dots, p_m)\\=A_n(x_1, \dots, x_j, \dots, x_i, \dots, x_m; p_1, \dots, p_j, \dots, p_i, \dots, p_m).
\end{multline}
Also
\begin{multline}\label{b2}
A_n(x_1, \dots, x_m; p_1, \dots, p_m)\\=\sum_{i=1}^m A_{n-1}(x_1, \dots, x_{i-1}, x_i+1, x_{i+1}, \dots, x_m; p_1, \dots, p_{i-1}, p_i+1, p_{i+1}, \dots, p_m).
\end{multline}
Further
\begin{equation}\label{b3}
A_n(x_1, \dots, x_m; p_1, \dots, p_m)=\sum_{s=0}^{n} \binom{n}{s}s!(x_1+s)A_{n-s}(x_1+s, x_2, \dots, x_m; p_1-1, p_2, \dots, p_m).
\end{equation}
Moreover, the following special instances hold via the basic recurrences listed above:
\begin{equation}\label{1}
A_n(x_1, \dots, x_m; -1, \dots, -1)=(x_1\cdots x_m)^{-1}(x_1+\cdots+x_m)(x_1+\cdots+x_m+n)^{n-1}
\end{equation}
and
\begin{equation}\label{2}
A_n(x_1, \dots, x_m; -1, \dots, -1, 0)=(x_1\cdots x_m)^{-1}x_m(x_1+\cdots+x_m+n)^{n}.
\end{equation}
\end{theorem}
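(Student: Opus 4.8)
The plan is to check the symmetry \eqref{b1} and the two recurrences \eqref{b2}, \eqref{b3} directly from the definition \eqref{b}, and then to deduce the two closed-form evaluations \eqref{1} and \eqref{2} from them by induction on the number of variables $m$.

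Identity \eqref{b1} needs no work: in \eqref{b} the sum runs over compositions $s=(s_1,\dots,s_m)$ of $n$, the multinomial coefficient $\binom ns$ is symmetric in its parts, and transposing the indices $i,j$ simultaneously in $(x_i,x_j)$, $(p_i,p_j)$ and $(s_i,s_j)$ is merely a relabeling of the summation index. For \eqref{b2} I would substitute the multinomial Pascal relation $\binom ns=\sum_{i:\,s_i\ge 1}\binom{n-1}{s-e_i}$ (with $e_i$ the $i$th unit vector) into \eqref{b}, interchange the two sums, and for each fixed $i$ set $t=s-e_i$; since $t_i=s_i-1$ the factor $(x_i+s_i)^{s_i+p_i}$ becomes $\bigl((x_i{+}1)+t_i\bigr)^{t_i+(p_i+1)}$ while the other factors are untouched, so the $i$th inner sum is exactly $A_{n-1}(x_1,\dots,x_i{+}1,\dots,x_m;p_1,\dots,p_i{+}1,\dots,p_m)$. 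For \eqref{b3} I would peel off the first coordinate of \eqref{b} via $\binom ns=\binom n{s_1}\binom{n-s_1}{s_2,\dots,s_m}$; expanding the right-hand side of \eqref{b3}, the inner $A_{n-s}$ also carries a first-coordinate index $t_1$, and the substitution $u=s+t_1$ regroups everything into the shape demanded by \eqref{b}, provided the scalar identity
\[
\sum_{r=0}^{u}\frac{u!}{r!}\,(y-r)\,y^{r}=y^{u+1}
\]
holds; this telescopes after writing $\sum_{r}\tfrac{ry^{r}}{r!}=y\sum_{r\le u-1}\tfrac{y^{r}}{r!}$.

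For \eqref{1} and \eqref{2} I would induct on $m$. The base case $m=1$ is immediate, since $A_n(x_1;-1)=(x_1+n)^{n-1}$ and $A_n(x_1;0)=(x_1+n)^{n}$. For the inductive step, isolate the last coordinate in \eqref{b} by setting $k=s_m$ and $\binom ns=\binom nk\binom{n-k}{s_1,\dots,s_{m-1}}$, which gives
\[
A_n(x_1,\dots,x_m;p_1,\dots,p_m)=\sum_{k=0}^{n}\binom nk(x_m+k)^{k+p_m}\,A_{n-k}(x_1,\dots,x_{m-1};p_1,\dots,p_{m-1}).
\]
Feeding in the inductive hypothesis for $m-1$ variables (with $\sigma'=x_1+\cdots+x_{m-1}$) reduces \eqref{1} to the Abel-type binomial identity $\sum_k\binom nk(x_m+k)^{k-1}(\sigma'+n-k)^{n-k-1}=(\tfrac1{x_m}+\tfrac1{\sigma'})(\sigma'+x_m+n)^{n-1}$ and reduces \eqref{2} to $\sum_k\binom nk(x_m+k)^{k}(\sigma'+n-k)^{n-k-1}=\tfrac1{\sigma'}(\sigma'+x_m+n)^{n}$. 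These two-variable identities are the classical Abel/Cauchy binomial formulas — precisely the $m=2$ instances of \eqref{1} and \eqref{2} — which I would either cite from Riordan \cite{Riordan} and Pitman \cite{Pitman} or establish by a short induction on $n$ using the ordinary Pascal rule. Finally, \eqref{b1} lets one place the distinguished ($p=0$) coordinate of \eqref{2} in any position, and a single application of \eqref{b2} to $A_n(x_1,\dots,x_m;-1,\dots,-1)$ rewrites it as a sum of $m$ terms each covered by \eqref{2}, whose total produces the factor $x_1+\cdots+x_m$ appearing in \eqref{1} — a convenient consistency check.

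The main obstacle is conceptual rather than computational: the symmetry and recurrences \eqref{b1}–\eqref{b3} only permute and shift the offsets $p_i$ and never by themselves collapse the sum (unwinding \eqref{b2} or \eqref{b3} just returns \eqref{b}), so the induction on $m$ genuinely requires an external seed — the classical Abel binomial identity in the exact exponent form that makes the reduction close. The remaining work, namely the Pascal/reindexing manipulations and the telescoping in \eqref{b3}, is routine.
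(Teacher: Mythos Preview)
The paper does not prove this theorem; it is quoted as a background result, attributed to Pitman and Riordan, with no argument supplied. So there is no in-paper proof to compare against.

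Your sketch is correct. The symmetry \eqref{b1} is indeed a relabeling, the multinomial Pascal substitution gives \eqref{b2}, and your telescoping identity $\sum_{r=0}^{u}\tfrac{u!}{r!}(y-r)y^{r}=y^{u+1}$ does verify \eqref{b3} after the reindexing $u=s+t_1$ you describe. Your closing observation is also on point and worth emphasizing: the phrase ``via the basic recurrences listed above'' in the theorem statement is slightly optimistic, because \eqref{b1}--\eqref{b3} by themselves only shift and permute the offsets $p_i$ and never collapse the sum; one genuinely needs the two-variable Abel/Cauchy seed (the $m=2$ instances of \eqref{1} and \eqref{2}) to launch the induction on $m$. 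That seed is exactly what the cited references supply, so your plan --- induct on $m$, isolate the last coordinate, and invoke the classical binomial Abel identity at the base --- is the standard route and matches what one finds in Riordan and Pitman.
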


Lastly, we introduce a core technical lemma which incorporates ideas from large deviations. This lemma will aid in the asymptotic calculations in Section \ref{sec:asymp}.

\begin{lemma}\label{LD}
Take $0<c<1$. Let $X_1, X_2, \dots$ be iid Poisson$(1)$ random variables. Then
\begin{equation}
\PP(X_1+\cdots+X_n \leq nc)=\frac{\exp\left(-n\left(c\log c-c+1\right)\right)}{\sqrt{2\pi nc}(1-c)}\left(1-\frac{1}{n}\left(\frac{1}{12c}+\frac{c}{(1-c)^2}\right)\right)\left(1+O\left(n^{-2}\right)\right).
\end{equation}
\end{lemma}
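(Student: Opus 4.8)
The plan is to evaluate the probability $\PP(X_1+\cdots+X_n \le nc)$ by exploiting the fact that a sum of $n$ independent Poisson$(1)$ variables is exactly Poisson$(n)$, so that
\[
\PP(X_1+\cdots+X_n \le nc) = e^{-n}\sum_{k=0}^{\lfloor nc\rfloor} \frac{n^k}{k!}.
\]
Since $c<1$ the partition function is dominated by its last term $k=\lfloor nc\rfloor$, because consecutive ratios $\frac{n^{k}/k!}{n^{k+1}/(k+1)!} = \frac{k+1}{n}$ are bounded away from $1$ in the relevant range. So the first step is to isolate the top term and write the tail as a geometric-like series with slowly varying ratios. Assuming $nc$ is an integer (the general case follows by a standard interpolation/monotonicity argument, or one absorbs the fractional part into the error term), I would factor out $e^{-n} n^{nc}/(nc)!$ and expand
\[
\sum_{j\ge 0} \frac{(nc)!}{(nc-j)!}\, n^{-j} = \sum_{j\ge 0} \prod_{i=0}^{j-1}\frac{nc-i}{n},
\]
then show this sum equals $\frac{1}{1-c} + \frac{1}{n}\cdot(\text{correction}) + O(n^{-2})$ by carefully controlling the product $\prod_{i=0}^{j-1}(1 - i/(nc))$ for each $j$ and summing the resulting series term by term (the $j$-th term contributing to the $1/n$ coefficient an amount proportional to $\binom{j}{2}c^{j}/c$, which resums to something like $\frac{c}{(1-c)^3}$ before recombination).

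The second main step is the Stirling expansion of the prefactor. Using
\[
(nc)! = \sqrt{2\pi nc}\,(nc)^{nc} e^{-nc}\left(1 + \frac{1}{12nc} + O(n^{-2})\right),
\]
one gets
\[
e^{-n}\frac{n^{nc}}{(nc)!} = \frac{e^{-n}\, n^{nc}\, e^{nc}}{\sqrt{2\pi nc}\,(nc)^{nc}}\left(1 - \frac{1}{12nc} + O(n^{-2})\right)
= \frac{e^{-n(c\log c - c + 1)}}{\sqrt{2\pi nc}}\left(1 - \frac{1}{12nc} + O(n^{-2})\right),
\]
where the exponent simplifies because $n^{nc}/(nc)^{nc} = c^{-nc} = e^{-nc\log c}$ and combining with $e^{-n+nc}$ gives exactly $\exp(-n(c\log c - c + 1))$. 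Finally I multiply the prefactor expansion by the series expansion, collect the two sources of $1/n$ terms — one being $-\frac{1}{12c}$ from Stirling, the other being $\frac{c}{(1-c)^2}$ (after dividing the series correction by the leading $\frac{1}{1-c}$ and simplifying) — to obtain the claimed bracket $\left(1 - \frac1n\left(\frac{1}{12c} + \frac{c}{(1-c)^2}\right)\right)$, with all remaining contributions absorbed into $\left(1+O(n^{-2})\right)$.

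The step I expect to be the main obstacle is the uniform control of the tail series $\sum_{j\ge 0}\prod_{i=0}^{j-1}(1-i/(nc))$ to the required $O(n^{-2})$ precision: one must simultaneously (i) show the terms decay geometrically with ratio $\approx c$ so the sum converges and large-$j$ terms are negligible, and (ii) extract the subleading $1/n$ coefficient correctly, which requires expanding each finite product $\prod_{i=0}^{j-1}(1-i/(nc)) = 1 - \frac{1}{nc}\binom{j}{2} + O(j^4/n^2)$ and justifying the interchange of the $j$-summation with the $n$-asymptotics — the $O(j^4/n^2)$ remainders resum against the geometric decay to give a genuine $O(n^{-2})$, but this needs a clean dominated-convergence-style bound. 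A secondary technical point is handling non-integer $nc$, which I would dispatch either by noting $\PP(\cdot \le nc) = \PP(\cdot \le \lfloor nc\rfloor)$ and checking the stated formula is insensitive to replacing $c$ by $\lfloor nc\rfloor/n$ up to $O(n^{-2})$, or by citing the relevant asymptotic expansion already available in the literature referenced in the paper.
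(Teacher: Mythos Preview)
Your proposal is correct and the computations check out: factoring out the top term $e^{-n}n^{nc}/(nc)!$, the residual sum is $\sum_{j\ge 0} c^j\prod_{i=0}^{j-1}(1-i/(nc))$, whose leading piece is $\sum c^j=\tfrac{1}{1-c}$ and whose $1/n$ correction is $-\tfrac{1}{nc}\sum_j \binom{j}{2}c^j=-\tfrac{c}{n(1-c)^3}$, which after division by $\tfrac{1}{1-c}$ gives precisely the $-\tfrac{c}{n(1-c)^2}$ you need; combined with the $-\tfrac{1}{12nc}$ from Stirling you recover the stated bracket. The tail control you flag as the main obstacle is routine once you split at, say, $J=\lfloor \log^2 n\rfloor$: for $j\le J$ the expansion $1-\binom{j}{2}/(nc)+O(j^4/n^2)$ holds uniformly and the remainders resum against $c^j$ to an honest $O(n^{-2})$, while $\sum_{j>J}c^j$ is superpolynomially small.

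The paper takes a different route. Rather than manipulating the Poisson cumulative sum directly, it performs an exponential tilt: it introduces the tilted law with density proportional to $e^{t^*(k-c)}\PP(X_1=k)$, which for Poisson$(1)$ at $t^*=\log c$ yields auxiliary i.i.d.\ Poisson$(c)$ variables $Y_i$, rewrites $\PP(\sum X_i\le nc)$ as $(m(c))^n$ times a centered sum in the $Y_i$, passes to a contour integral via the characteristic function of $Y_1-c$, and then invokes the Bahadur--Rao/Blackwell--Hodges refined large-deviation expansion, reading off the $1/n$ coefficient from the central moments $\mu_2,\mu_3,\mu_4$ of $Y_1$. Your argument is more elementary and fully self-contained---it needs nothing beyond Stirling and a careful series estimate---whereas the paper's tilting approach is the standard large-deviations machinery, which is less explicit here but generalizes immediately to any lattice distribution (one simply recomputes $t^*$, $m(c)$, and the moments of the tilted law). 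For the specific Poisson case both are equally efficient; your direct method arguably makes the origin of each correction term ($1/(12c)$ from Stirling, $c/(1-c)^2$ from the tail sum) more transparent.
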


\begin{proof}
Let
\begin{equation}
\phi(t)=\mathbb{E} (e^{tX_1})=\exp(e^t-1)
\end{equation}
be the moment generating function of Poisson$(1)$, and $\psi(c, t)=e^{-ct}\phi(t)$. There exists a unique $t^*(c)=\log c$ which minimizes $\psi(c, t)$ with respect to $t$. Write
\begin{equation}
m(c)=\psi(c, t^*(c))=\exp(-c\log c+c-1),
\end{equation}
whose exponent coincides with the negative of the Cram\'{e}r rate function for Poisson$(1)$. We construct auxiliary iid random variables $Y_1, Y_2, \dots$ such that $Y_1$ and $X_1$ have the same range, and for $k=0, 1, \dots$,
\begin{equation}
\PP(Y_1=k)=\frac{1}{m(c)}\PP(X_1=k)\exp(t^*(c)\cdot (k-c))=e^{-c}\frac{c^k}{k!},
\end{equation}
i.e. $Y_1, Y_2, \dots$ are iid Poisson$(c)$ random variables. Using iid-ness, this further gives, for all admissible integers $n$ and $k\leq nc$,
\begin{equation}
\PP(X_1+\cdots+X_n=nc-k)=(m(c))^n e^{kt^*(c)}\PP(Y_1+\cdots+Y_n=nc-k).
\end{equation}
Recall that an integer-valued random variable $U$ with characteristic function $\eta$ may be alternatively expressed as
\begin{equation}
\PP(U=u)=\frac{1}{2\pi} \int_{-\pi}^\pi e^{-itu} \eta(t) dt.
\end{equation}
Therefore, as $n\rightarrow \infty$, we have that the
\begin{equation}
\PP(X_1+\cdots+X_n \leq nc) \sim \frac{(m(c))^n}{2\pi} \int_{-\pi}^\pi \frac{1}{1-ze^{it}} \psi^n(t) dt,
\end{equation}
where $z=e^{t^*(c)}=c$ and $\psi(t)$ is the characteristic function of $Y_1-c$.

We compute the central moments of $Y_1$:
\begin{equation}
\mu_2=c, \hspace{.2cm} \mu_3=c, \hspace{.2cm} \mu_4=3c^2+c.
\end{equation}
It follows that
\begin{align}
\frac{\mu_4}{\mu_2^2}-3-\frac{5\mu_3^2}{3\mu_2^3}&=-\frac{2}{3c}
\intertext{and}
\frac{-z\frac{\mu_3}{\mu_2}+z\frac{1+z}{1-z}}{(1-z)\mu_2}&=\frac{2c}{(1-c)^2}.
\end{align}
The conclusion then follows from applying precise large deviation asymptotic estimation as in Bahadur and Rao \cite{BR} and Blackwell and Hodges \cite{BH}, but adapted to the left tail. See also Dembo and Zeitouni \cite[Theorem 3.7.4]{DZ} for a summary statement.
\end{proof}

\begin{remark}
The asymptotic precision in the above Lemma \ref{LD} may be sharpened if we keep more terms in the expansion.
\end{remark}

\subsection{Main formulas} \label{sec:asymp}
We are now ready to provide explicit formulas which show how the parking statistic $a_m$, the parking preference of the last car, depends on $p$. 
In Theorem \ref{component} we give the distribution of $a_m$ and in Theorem \ref{mean} we calculate its expected value. 
Both formulas depict explicit $p$ dependence. 
Theorems \ref{component} and \ref{mean} are extensions of the corresponding results in \cite{DHHRY}, where the probabilistic scenario of parking $n$ cars on a one-way street with $n$ spots are investigated. 
Parking $m \leq n$ cars instead of $n$ cars on $n$ spots opens up many new possibilities, and as such, the current results and their proofs are more complicated than the special situation explored in \cite{DHHRY}. After proving these initial results, we then specialize to the situation when $m=cn$ for some $0<c<1$ and study the asymptotics in Proposition \ref{cor-asymp}. In particular, we establish some sharp contrasting behavior (cf. Remark \ref{blow}).

\begin{theorem}\label{component}
Consider the preference list $\alpha=(a_1,a_2,\ldots,a_m) \in [n]^m$, chosen uniformly at random. Then given that $\alpha \in \PF{m, n}$, we have
\begin{align}\label{prob-long-equation}
&\PP(a_m=j~|~\alpha \in \PF{m, n})=\frac{n-m+2}{(n-m+1)(n+1)}-\frac{1}{(n+1)^{m-1}} \cdot \notag \\
&\cdot \Big[p\sum_{s=n-j+1}^{m-1} \binom{m-1}{s} (n-s)^{m-s-2} (s+1)^{s-1} +(1-p)\sum_{s=j}^{m-1} \binom{m-1}{s} (n-s)^{m-s-2} (s+1)^{s-1}\Big],
\end{align}
where $a_m$ denotes the parking preference of the last car.
\end{theorem}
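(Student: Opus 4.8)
The plan is to condition on the parking preferences of the first $m-1$ cars and the resulting configuration of occupied spots, then use the probabilistic parking protocol to compute the probability that the last car prefers spot $j$. Since $\alpha \in \PF{m,n}$ and $m-1$ cars park first, exactly $m-1$ spots are occupied, leaving $n-(m-1)$ empty spots; write $E = \{e_1 < e_2 < \cdots < e_{n-m+1}\}$ for the set of empty spots after the first $m-1$ cars park. The last car, with preference $a_m = j$, parks successfully in all cases (since we condition on $\alpha \in \PF{m,n}$), so the event $\{a_m = j\}$ occurs with probability $1/n$ \emph{over the choice of} $a_m$, but we want the conditional distribution given $\alpha \in \PF{m,n}$, which reweights by the probability that the full list parks. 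So I would write
\[
\PP(a_m = j \mid \alpha \in \PF{m,n}) = \frac{\PP(a_m = j,\ \alpha \in \PF{m,n})}{\PP(\alpha \in \PF{m,n})},
\]
where the denominator is known from Theorem \ref{thm:old} to be $(n-m+1)(n+1)^{m-1}/n^m$. For the numerator, I would fix the first $m-1$ preferences $(a_1,\dots,a_{m-1})$ and sum over all those that, together with the protocol, leave a specified empty-spot set $E$ and allow the last car (preferring $j$) to park; the combinatorial heart is to enumerate, for each possible $E$, how many preference lists of the first $m-1$ cars realize it.

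The key technical tool here is the parking function multi-shuffle (Definition \ref{shuffle}): if the occupied spots after $m-1$ cars are the complement of $E$, say at positions forming gaps of sizes $k_1-1, k_2-k_1-1, \dots$, then the number of preference lists of the first $m-1$ cars producing exactly that occupied set is a multinomial coefficient times a product of $(\text{gap size}+1)^{(\text{gap size}-1)}$ terms — this is where the factors $(s+1)^{s-1}$ and $(n-s)^{m-s-2}$ in the statement come from, with $s+1$ playing the role of a block length. The protocol-dependence on $p$ enters precisely through the last car: given empty-spot set $E$ and preference $j$, the car lands in spot $j$ if $j \in E$; if $j \notin E$, it flips the coin and goes forward (prob $p$) to the next empty spot $> j$, or backward (prob $1-p$) to the previous empty spot $< j$ — and whether it parks at all, and the reweighting, depends on this. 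So I would split the count according to whether $j$ is to the left or right of the relevant block boundary, producing the two sums (one multiplied by $p$, one by $1-p$) with ranges $s \geq n-j+1$ and $s \geq j$ respectively; the asymmetry in the summation limits reflects the forward/backward asymmetry of the protocol, consistent with Remark \ref{rmk:symmetry} (the $j \leftrightarrow n+1-j$, $p \leftrightarrow 1-p$ symmetry).

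After assembling the numerator as a $p$-weighted combination of two sums over $s$ of products of binomial coefficients and powers, I would divide by $\PP(\alpha \in \PF{m,n})$ and simplify. The "constant" term $\frac{n-m+2}{(n-m+1)(n+1)}$ should emerge from the $s = n-j+1$ through $m-1$ and $s = j$ through $m-1$ sums being completed to full sums over $s = 0, \dots, m-1$ (which telescope/evaluate in closed form via Abel's multinomial identities \eqref{1}–\eqref{2}, since the full sum counts \emph{all} of $\PF{m,n}$ weighted appropriately and is $p$-independent), with the displayed sums being the "missing" tail pieces. Concretely, I expect to use \eqref{2} or a specialization of \eqref{b} with $p_j = -1$ to evaluate $\sum_s \binom{m-1}{s}(n-s)^{m-s-2}(s+1)^{s-1}$ in closed form, which pins down the constant.

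The main obstacle will be the bookkeeping in the multi-shuffle step: correctly translating "empty-spot set $E$ with the last car preferring $j$ and going forward/backward" into the right block decomposition so that the count is exactly $\binom{m-1}{s}(n-s)^{m-s-2}(s+1)^{s-1}$ — in particular getting the exponents $m-s-2$ and $s-1$ right (rather than, say, $m-s-1$ or $s$), since a parking function on $t$ spots contributes $(t+1)^{t-1}$ but here one of the blocks has a car with a \emph{fixed} preference (the last car), which drops the exponent by one and accounts for the $-2$. I would verify the final formula against the $m=n$ case in \cite{DHHRY} and against small cases (e.g. $m=1$, where the formula should collapse to the uniform distribution $1/n$) as sanity checks.
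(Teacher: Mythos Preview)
Your approach is essentially the paper's: condition on the empty-spot set left by the first $m-1$ cars via the multi-shuffle, write the conditional probability as (total over all empty-spot configurations) minus the $p$- and $(1-p)$-weighted failure terms, and evaluate the total via Abel's identity \eqref{1} to obtain the constant $\frac{n-m+2}{(n-m+1)(n+1)}$.

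One correction to your bookkeeping anticipation: the exponent $m-s-2$ does \emph{not} arise because the last car sits inside a block with fixed preference---the last car is not in any block at all. In the failure case (say $j>k_{n-m+1}=:k$, coin says forward), the $n-k$ spots to the right of $k$ are all occupied by $n-k$ of the first $m-1$ cars (a classical parking function after translation, contributing $(n-k+1)^{n-k-1}$), while the remaining $m-1-(n-k)=m-n+k-1$ cars park among the $k-1$ spots to the left (an $(m-n+k-1,\,k-1)$-parking function, contributing $(n-m+1)\,k^{m-n+k-2}$ by \eqref{eqn:old}). The substitution $s=n-k$ then gives $(n-s)^{m-s-2}(s+1)^{s-1}$ directly, and the extra factor $(n-m+1)$ cancels against the denominator from Theorem~\ref{thm:old}. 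Keeping this straight will spare you the exponent-chasing you anticipated.
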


\begin{remark}
In \eqref{prob-long-equation}, when the lower index of summation is larger than the upper index we interpret the sum as zero by convention.
\end{remark}

\begin{remark}\label{rmk:symmetry}
Note the parking symmetry as observed in the introduction: $\PP(a_m=j|\alpha \in \PF{m, n})$ under protocol with parameter $p$ equals $\PP(a_m=n+1-j|\alpha \in \PF{m, n})$ under protocol with parameter $1-p$.
\end{remark}

\begin{proof}[Proof of Theorem~\ref{component}]
Before the $m$th car enters, cars $1, \dots, m-1$ have all parked along the one-way street with $n$ spots, leaving $n-m+1$ open spots for the $m$th car to park in. Let $k_i$ for $i=1, \dots, n-m+1$ represent these spots, so that $0\eqqcolon k_0<k_1<\cdots<k_{n-m+1}<k_{n-m+2}\coloneqq n+1$. Since a car cannot jump over an empty spot, the parking protocol implies that $(a_1, \dots, a_{m-1})$ corresponds to a parking function multi-shuffle of $n-m+2$ parking functions. In other words, $(a_1, \dots, a_{m-1})$ may be decomposed into $n-m+2$ disjoint non-interacting segments (some segments might be empty), with each segment a parking function of length $(k_{i}-k_{i-1}-1)$ after translation. The open spot $k_i$ where $i=1, \dots, n-m+1$ could be either the same as $j$, the preference of the last car, in which case the car parks directly. Or, $k_i$ could be bigger than or less than $j$, in which case where the last car parks depends on the outcome of the biased coin flip as it will dictate the car to go forward or backward. Note that when $j<k_1$ (resp. $j>k_{n-m+1}$) only the forward (resp. backward) movement of the last car will result in a successful parking scenario, whereas for all other $j$'s, the last car will always be able to park as there are open spots both behind and ahead of this car. Using Theorem \ref{thm:old}, we have
\begin{align}\label{long}
\PP(a_m=j&~|~\alpha \in \PF{m, n})=\frac{1}{(n-m+1)(n+1)^{m-1}} \cdot \notag \\
&\cdot \Big[\sum_{k_1, \dots, k_{n-m+1}} \binom{m-1}{k_1-k_0-1, \dots, k_{n-m+2}-k_{n-m+1}-1} \prod_{i=1}^{n-m+2} (k_{i}-k_{i-1})^{k_{i}-k_{i-1}-2} \notag \\
&\hspace{2cm} -p\sum_{k=n-m+1}^{j-1} (n-m+1) \binom{m-1}{n-k} k^{m-n+k-2} (n-k+1)^{n-k-1} \notag \\
&\hspace{2cm} -(1-p)\sum_{k=j+1}^{m} (n-m+1)\binom{m-1}{k-1} k^{k-2} (n-k+1)^{m-k-1}\Big].
\end{align}
The first summation accounts for all possible locations of the empty spots $k_1, \dots, k_{n-m+1}$ and all possible movement of the last car (assuming that it always parks). 
The middle summation accounts for $j>k_{n-m+1}$ and the last car moves forward (thus failing to park) and the last summation accounts for $j<k_1$ and the last car moves backward (thus failing to park). 
We also note that implicitly the first empty spot $k_1\leq m$ and the last empty spot $k_{n-m+1}\geq n-m+1$. 
For the last empty spot $k_{n-m+1}$, there are $m-n+k_{n-m+1}-1$ cars and $k_{n-m+1}-1$ spots to its left and $n-k_{n-m+1}$ cars and $n-k_{n-m+1}$ spots to its right. 
Similarly, for the first empty spot $k_1$, there are $k_1-1$ cars and $k_1-1$ spots to its left and $m-k_1$ cars and $n-k_1$ spots to its right. The multinomial coefficients come from the parking function multi-shuffle construction and the other multiplicative factors come from \eqref{eqn:old}.

To simplify \eqref{long}, we set $s=(k_1-k_0-1, \dots, k_{n-m+2}-k_{n-m+1}-1)$ with $\sum_{i=1}^{n-m+2} s_i=m-1$. By Abel's multinomial identity \eqref{1}, the first summation reduces to $$A_{m-1}(1, \dots, 1; -1, \dots, -1)=(n-m+2)(n+1)^{m-2}.$$ We further set $s=n-k$ in the middle summation and $s=k-1$ in the last summation. Together this gives
\begin{align*}
&\PP(a_m=j~|~\alpha \in \PF{m, n})=\frac{n-m+2}{(n-m+1)(n+1)}-\frac{1}{(n+1)^{m-1}} \cdot \notag \\
&\cdot \Big[p\sum_{s=n-j+1}^{m-1} \binom{m-1}{s} (n-s)^{m-s-2} (s+1)^{s-1} +(1-p)\sum_{s=j}^{m-1} \binom{m-1}{s} (n-s)^{m-s-2} (s+1)^{s-1}\Big].\hfill\qedhere
\end{align*}
\end{proof}
We now give the expected value for the preference of the last car.
\begin{theorem}\label{mean}
For preference list $\alpha \in [n]^m$ chosen uniformly at random, we have
\begin{equation}\label{identity}
\mathbb{E}(a_m~|~\alpha \in \PF{m, n})= 
       \frac{n+2p}{2}-\left(p-\frac{1}{2}\right)\frac{e^{n+1}(m-1)! \, \PP(Z\le m-1)}{(n+1)^{m-1}},
\end{equation}
where $Z$ is a Poisson random variable with parameter $\lambda = n+1$. Alternatively, the  identity in \eqref{identity} can be written as
\begin{equation*}
\mathbb{E}(a_m~|~\alpha \in \PF{m, n})= 
       \frac{n+2p}{2}-\left(p-\frac{1}{2}\right)\frac{e^{n+1}\Gamma(m,n+1)}{(n+1)^{m-1}},
\end{equation*}
where $\Gamma(s,x)$ is the upper incomplete gamma function
$$
\Gamma(s,x) = \int_x^{\infty}t^{s-1}e^{-t}{\rm d}t.
$$
\end{theorem}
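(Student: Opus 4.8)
The plan is to compute $\mathbb{E}(a_m \mid \alpha \in \PF{m,n}) = \sum_{j=1}^{n} j\,\PP(a_m = j \mid \alpha \in \PF{m,n})$ directly from the distribution in Theorem~\ref{component}. Write $T(s) = \binom{m-1}{s}(n-s)^{m-s-2}(s+1)^{s-1}$. The $p$-free term $\tfrac{n-m+2}{(n-m+1)(n+1)}$ of \eqref{prob-long-equation} contributes $\tfrac{n-m+2}{(n-m+1)(n+1)}\cdot\tfrac{n(n+1)}{2} = \tfrac{n(n-m+2)}{2(n-m+1)}$. For the two nested sums I would exchange the order of summation. Since $m - 1 < n$, the constraint $j \le s$ turns $\sum_{j} j\sum_{s\ge j} T(s)$ into $\sum_{s=1}^{m-1} T(s)\sum_{j=1}^{s} j = \tfrac12\sum_{s=1}^{m-1} T(s)\,s(s+1)$, while the constraint $s \ge n-j+1$ turns $\sum_{j} j \sum_{s\ge n-j+1}T(s)$ into $\sum_{s=1}^{m-1} T(s)\sum_{j=n-s+1}^{n} j = \tfrac12\sum_{s=1}^{m-1} T(s)\,s(2n-s+1)$. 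Weighting the first by $1-p$ and the second by $p$ as in \eqref{prob-long-equation} and using $p(2n-s+1) + (1-p)(s+1) = (s+1) + 2p(n-s)$, one gets $\mathbb{E}(a_m \mid \cdot) = \tfrac{n(n-m+2)}{2(n-m+1)} - (n+1)^{-(m-1)}\bigl[\tfrac12\sum_{s} T(s)s(s+1) + p\sum_{s}T(s)s(n-s)\bigr]$. Thus $\mathbb{E}(a_m \mid \cdot)$ is affine in $p$, with $p$-coefficient $B = -(n+1)^{-(m-1)}\sum_{s=1}^{m-1}T(s)\,s(n-s)$.

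The crux is then a single evaluation via Abel's multinomial theorem (Theorem~\ref{Abel}). Using $s\binom{m-1}{s} = (m-1)\binom{m-2}{s-1}$ and shifting the index by one, $\sum_{s=1}^{m-1}T(s)\,s(n-s) = (m-1)\sum_{t=0}^{m-2}\binom{m-2}{t}(t+2)^{t}(n-1-t)^{m-2-t} = (m-1)\,A_{m-2}(2,\, n-m+1;\, 0,\, 0)$ in the notation of \eqref{b}. Applying the recurrence \eqref{b3} in the first slot rewrites this as $(m-1)\sum_{s=0}^{m-2}\binom{m-2}{s}s!(2+s)\,A_{m-2-s}(2+s,\, n-m+1;\, -1,\, 0)$, and each inner term is exactly the two-variable instance of \eqref{2}, namely $A_{m-2-s}(2+s, n-m+1; -1,0) = (2+s)^{-1}(n+1)^{m-2-s}$ (here $n-m+1\ge 1$ since $m\le n$). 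The factors $2+s$ cancel, and after re-indexing ($k = m-2-s$) one obtains $\sum_{s}T(s)s(n-s) = \sum_{k=0}^{m-2}\tfrac{(m-1)!}{k!}(n+1)^{k}$. Consequently $B = 1 - C$ with $C = (n+1)^{-(m-1)}\sum_{k=0}^{m-1}\tfrac{(m-1)!}{k!}(n+1)^{k}$, the term $k=m-1$ supplying the extra $+1$; and since $\sum_{k=0}^{m-1}\tfrac{(n+1)^{k}}{k!} = e^{n+1}\,\PP(Z \le m-1)$ for $Z \sim \mathrm{Poisson}(n+1)$, this $C$ is precisely the factor appearing in \eqref{identity}.

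To fix the constant term $A$ without a second Abel computation, I would invoke the parking symmetry of Remark~\ref{rmk:symmetry}: $\PP(a_m = j \mid \cdot)$ under parameter $p$ equals $\PP(a_m = n+1-j \mid \cdot)$ under parameter $1-p$, so the substitution $j \mapsto n+1-j$ gives $\mathbb{E}_p(a_m \mid \cdot) + \mathbb{E}_{1-p}(a_m \mid \cdot) = n+1$. For an affine function $A+Bp$ this forces $2A + B = n+1$, hence $A = \tfrac{n+C}{2}$, and therefore $\mathbb{E}(a_m \mid \cdot) = \tfrac{n+C}{2} + (1-C)p = \tfrac{n+2p}{2} - \bigl(p - \tfrac12\bigr)C$, as claimed. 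The second displayed formula follows immediately from the classical identity $\Gamma(m,x) = (m-1)!\,e^{-x}\sum_{k=0}^{m-1}\tfrac{x^{k}}{k!}$ for integer $m\ge 1$, evaluated at $x = n+1$.

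I expect the main obstacle to be the bookkeeping in the Abel reduction — choosing the right coordinate and the right recurrence so that the spurious $2+s$ factors cancel — together with the re-indexing needed to recognize the Poisson tail (equivalently the upper incomplete gamma) in the answer; the order-swaps and arithmetic-series evaluations are routine. As a cross-check one could avoid the symmetry argument entirely and compute the constant term directly via $s(s+1) = s(n+1) - s(n-s)$ together with the companion evaluation $\sum_{s=1}^{m-1}T(s)\,s = (m-1)(n+1)^{m-2}/(n-m+1)$, obtained from $A_{m-2}(2, n-m+1; 0, -1)$ by the symmetry \eqref{b1} and \eqref{2}.
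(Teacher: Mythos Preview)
Your proof is correct and shares the paper's overall scaffold---sum against $j$, swap orders of summation, reduce to Abel-type sums, and recognize the truncated exponential series as a Poisson tail---but organizes the computation differently. The paper rewrites the weighted arithmetic series as $[(2p-1)(n-s)(s+1)+(n+2p)(s+1)-2p(n+1)]$ and evaluates the three resulting Abel sums $A_{m-1}(n-m+1,1;0,0)$, $A_{m-1}(n-m+1,1;-1,0)$, $A_{m-1}(n-m+1,1;-1,-1)$ separately via \eqref{b3}, \eqref{2}, and \eqref{1}, then simplifies algebraically. You instead isolate the affine structure in $p$ from the outset, compute only the slope $B$ via a single $(0,0)$-Abel evaluation (after the shift $s\binom{m-1}{s}=(m-1)\binom{m-2}{s-1}$, yielding $A_{m-2}(2,n-m+1;0,0)$), and recover the intercept $A$ from the parking symmetry $\mathbb{E}_p+\mathbb{E}_{1-p}=n+1$ (which the paper records only as a remark, not as a tool in the proof). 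This buys you a shorter derivation---one Abel recurrence instead of three evaluations plus cancellation---at the modest cost of invoking the symmetry externally; your optional cross-check via $s(s+1)=s(n+1)-s(n-s)$ and $A_{m-2}(2,n-m+1;0,-1)$ recovers the paper's route in compressed form. Both approaches ultimately hinge on the same mechanism, namely \eqref{b3} collapsing against \eqref{2} to produce $\sum_{k}\frac{(m-1)!}{k!}(n+1)^k$.
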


\begin{remark}\label{rmk:symmetry2}
From the parking symmetry described in the introduction, the sum of $\mathbb{E}(a_m~|~\alpha \in \PF{m, n})$ under protocol with parameter $p$ and $\mathbb{E}(a_m~|~\alpha \in \PF{m, n})$ under protocol with parameter $1-p$ is $n+1$.
\end{remark}

\begin{proof}[Proof of Theorem~\ref{mean}]
By Theorem~\ref{component},
\begin{align}
&\mathbb{E}(a_m~|~\alpha \in \PF{m, n})=\sum_{j=1}^n j\PP(a_m=j~|~\alpha \in \PF{m, n}) \notag \\
&=\frac{n(n-m+2)}{2(n-m+1)}-\frac{1}{(n+1)^{m-1}} \sum_{j=1}^n j\Big[p\sum_{s=n-j+1}^{m-1} \binom{m-1}{s} (n-s)^{m-s-2} (s+1)^{s-1}\notag \\
&\hspace{5cm}+(1-p) \sum_{s=j}^{m-1} \binom{m-1}{s} (n-s)^{m-s-2} (s+1)^{s-1}\Big] \notag \\
&=\frac{n(n-m+2)}{2(n-m+1)}-\frac{1}{(n+1)^{m-1}} \sum_{s=0}^{m-1} \binom{m-1}{s} (n-s)^{m-s-2} (s+1)^{s-1} \Big[p\sum_{j=n-s+1}^n j+(1-p)\sum_{j=1}^{s} j \Big] \notag \\
&=\frac{n(n-m+2)}{2(n-m+1)}-\frac{1}{2(n+1)^{m-1}} \Bigg[\sum_{s=0}^{m-1} \binom{m-1}{s} (n-s)^{m-s-2} (s+1)^{s-1} \cdot \notag \\
&\hspace{2cm} \cdot [(2p-1)(n-s)(s+1)+(n+2p)(s+1)-2p(n+1)]\Bigg] \label{A1} \\
&=\frac{n(n-m+2)}{2(n-m+1)}-\frac{1}{2(n+1)^{m-1}} \Big[(2p-1)A_{m-1}(n-m+1, 1; 0, 0)\notag \\
&\hspace{2cm}+(n+2p)A_{m-1}(n-m+1, 1; -1, 0)-2p(n+1)A_{m-1}(n-m+1, 1; -1, -1) \Big], \label{A2}
\end{align}
where Abel's binomial theorem is used multiple times from \eqref{A1} to \eqref{A2}.

Using \eqref{1} yields
\begin{equation}\label{eq:Am-1-1}
    A_{m-1}(n-m+1, 1; -1, -1)=\frac{n-m+2}{n-m+1}(n+1)^{m-2},
\end{equation} 
and using \eqref{2} yields
\begin{equation}\label{eq:Am-10}
    A_{m-1}(n-m+1, 1; -1, 0)=\frac{1}{n-m+1}(n+1)^{m-1}.
\end{equation}
Further note that by \eqref{b3} and \eqref{2}, we have that 
\begin{align}\label{eq:Am00}
A_{m-1}(n-m+1, 1; 0, 0)=\sum_{s=0}^{m-1} \binom{m-1}{s} (n+1)^s (m-s-1)!=(m-1)! \sum_{s=0}^{m-1} \frac{(n+1)^s}{s!}.
\end{align}
We recognize that
\begin{equation}\label{eq:Am00Poi}
e^{-(n+1)} \sum_{s=0}^{m-1} \frac{(n+1)^s}{s!}
\end{equation}
equals the probability that a Poisson random variable  $Z$ with parameter $\lambda=n+1$ is less than or equal to $m-1$. 
Substituting \eqref{eq:Am00Poi} into \eqref{eq:Am00} gives
\begin{equation}\label{substitute}
A_{m-1}(n-m+1, 1; 0, 0) = e^{n+1}(m-1)! \, \PP(Z \le m-1).
\end{equation}
Substituting \eqref{eq:Am-1-1}, \eqref{eq:Am-10}, and \eqref{substitute} back into \eqref{A2} yields
\begin{equation}\label{middle-step}
    \begin{split}
        \mathbb{E}(a_m~|~\alpha \in \PF{m, n}) & = \frac{n(n-m+2)}{2(n-m+1)} -\frac{(2p-1)e^{n+1}(m-1)! \, \PP(Z\le m-1)}{2(n+1)^{m-1}}\\
        & \quad \quad \quad -\frac{n+2p}{2(n-m+1)} + \frac{p(n-m+2)}{n-m+1}.
    \end{split}
\end{equation}
To finish the proof, observe that in \eqref{middle-step}, we have that
\begin{equation*}
        \frac{n(n-m+2)}{2(n-m+1)} -\frac{n+2p}{2(n-m+1)} + \frac{p(n-m+2)}{n-m+1}=\frac{n+2p}{2}.\qedhere
\end{equation*}

\end{proof}
We now provide an asymptotic result for the expected value of the preference of the last car.
\begin{proposition}\label{cor-asymp}
Take $m$ and $n$ large with $m=cn$ for some $0<c<1$. For preference list $\alpha \in [n]^m$ chosen uniformly at random, we have
\begin{equation}\label{eq:correction}
\mathbb{E}(a_m~|~\alpha \in \PF{m, n})=\frac{n+1}{2}-(2p-1) \frac{c}{2(1-c)}-\frac{1}{n} (2p-1) \frac{c^2-c-1}{2(1-c)^3}+O\left(n^{-2}\right).
\end{equation}
\end{proposition}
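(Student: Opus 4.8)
The plan is to take the exact formula \eqref{identity} of Theorem~\ref{mean} as the starting point and reduce everything to the asymptotics of a single Poisson tail, which is precisely what Lemma~\ref{LD} controls. Writing $\frac{n+2p}{2}=\frac{n+1}{2}+\bigl(p-\tfrac12\bigr)$, identity \eqref{identity} reads
\[
\mathbb{E}(a_m~|~\alpha\in\PF{m,n})=\frac{n+1}{2}+\Bigl(p-\tfrac12\Bigr)\bigl(1-R_n\bigr),\qquad
R_n:=\frac{e^{n+1}(m-1)!\,\PP(Z\le m-1)}{(n+1)^{m-1}},
\]
with $Z\sim\mathrm{Poisson}(n+1)$, so it suffices to expand $R_n$ up to order $n^{-2}$.

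First I would realize $Z$ as a sum $X_1+\cdots+X_{n+1}$ of i.i.d.\ $\mathrm{Poisson}(1)$ variables, so that $\PP(Z\le m-1)=\PP\bigl(X_1+\cdots+X_{n+1}\le (n+1)\tilde c\bigr)$ with $\tilde c:=\frac{m-1}{n+1}=c-\frac{c+1}{n+1}$ (here I use that $cn$ is an integer, so $(n+1)\tilde c=m-1\in\mathbb{Z}$ is exactly the threshold). Since $\tilde c\to c\in(0,1)$, Lemma~\ref{LD}, applied with $n+1$ in place of $n$ and $\tilde c$ in place of $c$, gives a two-term expansion for this probability. I would then feed in Stirling's formula $(m-1)!=\sqrt{2\pi(m-1)}\,\bigl(\frac{m-1}{e}\bigr)^{m-1}\bigl(1+\frac{1}{12(m-1)}+O(m^{-2})\bigr)$ and rewrite the Cram\'er exponential using $(n+1)\tilde c=m-1$, namely
\[
\exp\!\Bigl(-(n+1)\bigl(\tilde c\log\tilde c-\tilde c+1\bigr)\Bigr)=\frac{(n+1)^{m-1}e^{m-1}}{(m-1)^{m-1}e^{n+1}}.
\]
After this substitution essentially all of the bulky factors cancel — the powers $e^{\pm(n+1)}$, $(n+1)^{m-1}$, $(m-1)^{m-1}$, $e^{\pm(m-1)}$ and $\sqrt{2\pi(m-1)}$ all drop out — and what survives is
\[
R_n=\frac{1}{1-\tilde c}\Bigl(1+\tfrac{1}{12(m-1)}\Bigr)\Bigl(1-\tfrac{1}{n+1}\bigl(\tfrac{1}{12\tilde c}+\tfrac{\tilde c}{(1-\tilde c)^2}\bigr)\Bigr)\bigl(1+O(n^{-2})\bigr).
\]

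The last step is the routine but delicate bookkeeping: substitute $\tilde c=c-\frac{c+1}{n+1}$ and $m=cn$, expand $\frac{1}{1-\tilde c}=\frac{1}{1-c}\bigl(1-\frac{c+1}{(1-c)(n+1)}+O(n^{-2})\bigr)$, and collect the $n^{-1}$ terms. The two contributions of size $\frac{1}{12cn}$ (one with a $+$ sign from Stirling, one with a $-$ sign from $\frac{1}{12\tilde c}$) cancel, and using $\frac{c+1}{1-c}+\frac{c}{(1-c)^2}=\frac{1+c-c^2}{(1-c)^2}$ one obtains $R_n=\frac{1}{1-c}-\frac1n\cdot\frac{1+c-c^2}{(1-c)^3}+O(n^{-2})$. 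Plugging this back and simplifying with $1-\frac{1}{1-c}=-\frac{c}{1-c}$ and $1+c-c^2=-(c^2-c-1)$ yields \eqref{eq:correction}.

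I expect the main obstacle to be not the algebra but the justification of applying Lemma~\ref{LD} with the $n$-dependent ratio $\tilde c$ instead of a fixed constant: one needs the $O(n^{-2})$ remainder, and the $O(n^{-1})$ correction, in that lemma to be uniform for $\tilde c$ ranging over a fixed compact subinterval of $(0,1)$, and one must check that replacing $\tilde c$ by $c$ inside the $\frac{1}{n+1}$-correction costs only $O(n^{-2})$ — which is exactly what the claimed precision in \eqref{eq:correction} requires. This uniformity is standard for the Bahadur--Rao type estimates underlying Lemma~\ref{LD}. Alternatively, one could bypass the lemma and expand $R_n=\sum_{j=0}^{m-1}\prod_{i=1}^{j}\frac{m-i}{n+1}$ directly by a Laplace-type argument, but that route seems to require comparable effort.
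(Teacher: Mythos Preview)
Your proposal is correct and essentially coincides with the paper's first approach: starting from \eqref{identity}, applying Stirling and Lemma~\ref{LD} to the Poisson tail, watching the bulky factors cancel, and then expanding the surviving $\frac{1}{1-\tilde c}$ and correction terms in $c$ to obtain \eqref{cf} (your $R_n$ is exactly $(n+1)^{-(m-1)}A_{m-1}(n-m+1,1;0,0)$). Your explicit tracking of $\tilde c$ versus $c$ and the accompanying uniformity remark are in fact more careful than the paper's presentation; the paper also records an alternative derivation via the tree function $G(z)=\sum_s z^s(s+1)^s/s!$ and its derivatives, which you allude to at the end.
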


\begin{remark}
The lower order correction terms from $(n+1)/2$ for $\mathbb{E}(a_m~|~\alpha \in \PF{m, n})$ vanish completely under protocol with parameter $p=1/2$, and $\mathbb{E}(a_m~|~\alpha \in \PF{m, n})=(n+1)/2$ exactly.
\end{remark}

\begin{remark}\label{blow}
Recall that in \cite[Theorem 4]{DHHRY}, it was derived that for $m=n$,
\begin{equation}\label{eq:special}
\mathbb{E}(a_n~|~\alpha \in \PF{n, n})=\frac{n+1}{2}-(2p-1)\Big[\frac{\sqrt{2\pi}}{4}n^{1/2}-\frac{7}{6}\Big]+o(1).
\end{equation}
As $c\rightarrow 1$, the correction terms in \eqref{eq:correction} blow up, contributing to the different asymptotic orders between the generic situation $m=cn$ with $0<c<1$,  described by \eqref{eq:correction}, and the special situation $m=n$, described by \eqref{eq:special}.
\end{remark}

\begin{proof}[Proof of Proposition \ref{cor-asymp}]
    The proof follows from Theorem \ref{mean} and a careful asymptotic analysis of the Poisson term \eqref{substitute}. We provide two approaches that offer different perspectives. 
    

\vskip.1truein

\noindent \underline{First approach.} 
Notice that from Stirling's formula,
\begin{align}
(m-1)! \sim \sqrt{2\pi (m-1)}e^{-(m-1)} (m-1)^{m-1} \left[1+\frac{1}{12(m-1)}\right].
\end{align}
We also recall that
\begin{equation}
e^{-(n+1)} \sum_{s=0}^{m-1} \frac{(n+1)^s}{s!}
\end{equation}
equals the probability that the sum of $n+1$ iid Poisson$(1)$ random variables is less than or equal to $m-1$, and is asymptotic to
\begin{align}
&e^{-(n+1)} e^{(m-1)}\frac{\left(\frac{m-1}{n+1}\right)^{-(m-1)}}{\sqrt{2\pi (m-1)}\left(1-\frac{m-1}{n+1}\right)} \left(1-\frac{1}{n}\left(\frac{1}{12c}+\frac{c}{(1-c)^2}\right)+O\left(n^{-2}\right)\right) \notag \\
&=e^{-(n+1)} e^{(m-1)}\frac{\left(\frac{m-1}{n+1}\right)^{-(m-1)}}{\sqrt{2\pi (m-1)}\left(1-c\right)} \left(1-\frac{1}{n}\left(\frac{1}{12c}+\frac{c}{(1-c)^2}+\frac{c+1}{1-c}\right)+O\left(n^{-2}\right)\right)
\end{align}
from the large deviation expansion in Lemma~\ref{LD}. Dividing by $(n+1)^{m-1}$ and simplifying we get
\begin{equation}\label{cf}
\frac{1}{(n+1)^{m-1}}A_{m-1}(n-m+1, 1; 0, 0)=\frac{1}{1-c} \left(1+\frac{1}{n} \frac{c^2-c-1}{(1-c)^2}+O\left(n^{-2}\right)\right).
\end{equation}

\vskip.1truein

\noindent \underline{Second approach.} We write
\begin{align}\label{eq:before}
A_{m-1}(n-m+1, 1; 0, 0)&=\sum_{s=0}^{m-1} \binom{m-1}{s} (n-s)^{m-s-1} (s+1)^{s} \notag \\
&=n^{m-1} \sum_{s=0}^{m-1} \frac{(ce^{-c})^s}{s!} (s+1)^s \left(1-\frac{s(s+1)}{2cn}+\frac{s(s+1)}{n}-\frac{s^2 c}{2n}+O\left(n^{-2}\right)\right).
\end{align}

The tree function $F(z) = \sum_{s=0}^\infty \frac{z^s}{s!}(s+1)^{s-1}$ is related to the Lambert $W$ function via $F(z)=-W(-z)/z$, and
satisfies $F(ce^{-c}) = e^c$. The function $G(z)=\sum_{s=0}^\infty \frac{z^s}{s!} (s+1)^{s}$ is further related to the tree function $F(z)$ via $G(z)=zF'(z)+F(z)$. By the chain rule, $G(z)$ and its first and second derivatives respectively satisfy
\begin{align*}
G(ce^{-c}) = \frac{e^{c}}{1-c}, \quad G'(ce^{-c}) = \frac{2-c}{(1-c)^3}e^{2c}, \quad\mbox{and}\quad
G''(ce^{-c}) = \frac{2c^2-8c+9}{(1-c)^5} e^{3c}.
\end{align*}
We recognize that \eqref{eq:before} converges to
\begin{equation}
n^{m-1}\left(\sum_{s=0}^\infty\frac{(ce^{-c})^s}{s!}(s+1)^{s}\left(1+\frac1n(As+Bs^2)+O\left(n^{-2}\right)\right)\right),\label{lamb}
\end{equation}
where $A=-\frac{1}{2c}+1$ and $B=-\frac{1}{2c}+1-\frac{c}{2}.$
Using $G(z)$ with $z=ce^{-c}$, \eqref{lamb} can be written as 
\begin{equation}
n^{m-1}\left(G(z) + \frac1n \Big(AzG'(z) + B(z^2G''(z)+zG'(z))\Big)+O\left(n^{-2}\right)\right).
\end{equation}
Dividing \eqref{eq:before} by $(n+1)^{m-1}$ and simplifying we get
\begin{equation}
\frac{1}{(n+1)^{m-1}}A_{m-1}(n-m+1, 1; 0, 0)=\frac{1}{1-c} \left(1+\frac{1}{n} \frac{c^2-c-1}{(1-c)^2}+O\left(n^{-2}\right)\right).
\end{equation}


Combining the above, \eqref{A2} becomes
\begin{align*}
&\mathbb{E}(a_m~|~\alpha \in \PF{m, n})=\frac{n+2p}{2}-\left(p-\frac{1}{2}\right) \frac{1}{1-c} \left(1+\frac{1}{n} \frac{c^2-c-1}{(1-c)^2}+O\left(n^{-2}\right)\right) \notag \\
&=\frac{n+1}{2}-(2p-1) \frac{c}{2(1-c)}-\frac{1}{n} (2p-1) \frac{c^2-c-1}{2(1-c)^3}+O\left(n^{-2}\right).\qedhere
\end{align*}
\end{proof}

\section{Convergence rates}\label{conv}
In this section, we measure how close the conditional distribution of $a_m$ (investigated in Theorem \ref{mean} and Proposition \ref{cor-asymp}) is to the uniform distribution on $[n]$, which we denote by $\unif{n}$. 
Throughout this section, the notion of distance used  is the {\it total variation distance}. 
We recall that for two probability distributions $P$ and $Q$ over $[n]$, their total variation distance (TV) is given by
\begin{equation}\label{def:dTV}
    \| P - Q \|_{\TV} \coloneqq  \frac{1}{2}\sum_{j=1}^n| P(j) - Q(j) |.
\end{equation}
To simplify the notation, we write 
\begin{equation}\label{def:Qmnp}
    Q_{m,n,p} (\; \cdot \;) = \PP(a_m= \; \cdot \; | \; \alpha \in \PF{m, n}),
\end{equation}
where the conditional probability $\PP$ is under parking protocol with parameter $p$.
We begin by establishing a result that is useful for handling the total variation distance between $Q_{m,n,p}$ and ${\rm Uni}_n$. 
In words, Proposition \ref{prop:generalboundstv} tells us that establishing bounds for $\|Q_{m,n,p} - {\rm Uni}_n \|_{\TV}$ can be reduced to the classical case $p=1$.
\begin{proposition}\label{prop:generalboundstv} 
    For all $m,n \in \mathbb{N}$ with $m\le n$ and $p \in [0,1]$, the following bounds hold:
    $$
      |2p-1| \|Q_{m,n,1} - {\rm Uni}_n \|_{\TV}\le  \|Q_{m,n,p} - {\rm Uni}_n \|_{\TV} \le  \|Q_{m,n,1} - {\rm Uni}_n \|_{\TV}.
    $$
\end{proposition}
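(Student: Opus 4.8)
The plan is to exploit two structural features of the formula in Theorem~\ref{component}: its affine dependence on $p$, and the reflection symmetry recorded in Remark~\ref{rmk:symmetry}. Reading off \eqref{prob-long-equation}, the map $p\mapsto Q_{m,n,p}(j)$ is affine — the two sums enter with coefficients $p$ and $1-p$ — so that
\[
Q_{m,n,p}(j) = p\,Q_{m,n,1}(j) + (1-p)\,Q_{m,n,0}(j)\qquad\text{for every }j\in[n],
\]
which, since $p\in[0,1]$, realizes $Q_{m,n,p}$ as a convex combination of the extreme conditional distributions $Q_{m,n,1}$ and $Q_{m,n,0}$. Next I would record the reflection identity $Q_{m,n,0}(j)=Q_{m,n,1}(n+1-j)$; this is Remark~\ref{rmk:symmetry} applied with $p=0$, and can also be seen directly from the fact that the two sums in \eqref{prob-long-equation} swap under $j\mapsto n+1-j$. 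Because $\unif{n}$ is fixed by the reflection $j\mapsto n+1-j$ and the total variation distance is unchanged under relabeling the ground set, this gives $\|Q_{m,n,0}-\unif{n}\|_{\TV}=\|Q_{m,n,1}-\unif{n}\|_{\TV}$.

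The upper bound is then immediate: from $Q_{m,n,p}-\unif{n}=p\,(Q_{m,n,1}-\unif{n})+(1-p)\,(Q_{m,n,0}-\unif{n})$, the triangle inequality together with the identity above yields
\[
\|Q_{m,n,p}-\unif{n}\|_{\TV}\le p\,\|Q_{m,n,1}-\unif{n}\|_{\TV}+(1-p)\,\|Q_{m,n,0}-\unif{n}\|_{\TV}=\|Q_{m,n,1}-\unif{n}\|_{\TV}.
\]
For the lower bound I would set $f(j)=Q_{m,n,1}(j)-\unif{n}(j)$, so that $Q_{m,n,p}(j)-\unif{n}(j)=p\,f(j)+(1-p)\,f(n+1-j)$, and group the sum $\sum_{j=1}^n|p\,f(j)+(1-p)\,f(n+1-j)|$ into the reflection pairs $\{j,n+1-j\}$. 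For a pair with $j\neq n+1-j$, writing $a=f(j)$, $b=f(n+1-j)$, $u=pa+(1-p)b$, $v=pb+(1-p)a$, one has $u+v=a+b$ and $u-v=(2p-1)(a-b)$, whence by the reverse triangle inequality
\[
|u|+|v|\ \ge\ \max\bigl(|a+b|,\ |2p-1|\,|a-b|\bigr)\ \ge\ |2p-1|\,\max\bigl(|a+b|,\ |a-b|\bigr)\ =\ |2p-1|\,(|a|+|b|),
\]
using $|2p-1|\le 1$ in the middle step and the elementary identity $\max(|a+b|,|a-b|)=|a|+|b|$ at the end; a self-paired index $j=n+1-j$ (possible only for odd $n$) contributes $|f(j)|\ge|2p-1|\,|f(j)|$. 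Summing over all pairs gives $\|Q_{m,n,p}-\unif{n}\|_{\TV}\ge|2p-1|\,\|Q_{m,n,1}-\unif{n}\|_{\TV}$.

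The only step that is not pure bookkeeping is the lower bound, and there the one idea needed is to pair each index $j$ with its reflection $n+1-j$ before estimating; once that pairing is in place, everything collapses to the two-variable inequality above, and I do not anticipate any real obstacle beyond handling the self-paired middle index when $n$ is odd.
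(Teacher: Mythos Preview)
Your proof is correct and follows essentially the same approach as the paper: both arguments rest on the convex decomposition $Q_{m,n,p}=p\,Q_{m,n,1}+(1-p)\,Q_{m,n,0}$ from Theorem~\ref{component} together with the reflection identity $Q_{m,n,0}(j)=Q_{m,n,1}(n+1-j)$ from Remark~\ref{rmk:symmetry}. The paper then defers the remaining estimates to \cite[Proposition~24]{DHHRY}, whereas you spell out the triangle-inequality upper bound and the reflection-pairing lower bound explicitly; your version is thus a self-contained rendering of the same argument.
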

\begin{proof}
    Theorem \ref{component} implies that for any $m, n \in \mathbb{N}$ and $p \in [0,1]$, $Q_{m,n,p}$ is a convex combination of $Q_{m,n,1}$ and $Q_{m,n,0}$, so 
    $$
        Q_{m,n,p} = (1-p)Q_{m,n,0} + pQ_{m,n,1}.
    $$
    Also, $Q_{m,n,1}(j)=Q_{m,n,0}(n+1-j)$ for any $j\in [n]$ (see Remark \ref{rmk:symmetry}).
    The proof then parallels that of \cite[Proposition 24]{DHHRY}.
\end{proof}
In order to identify the lower bound for the total variation distance, the following alternative version of the total variation distance is instrumental.
\begin{proposition}[Proposition 4.5 in Levin and Peres \cite{levin2017markov}]\label{prop:altdTV} Let $P$ and $Q$ be two probability distributions over $[n]$, then 
    \begin{equation}\label{test}
        \| P - Q \|_{\TV} = \frac{1}{2} \sup \left \lbrace \sum_{j=1}^n f(j)P(j) - \sum_{j=1}^nf(j)Q(j): \max_j|f(j)| \le 1 \right \rbrace.
    \end{equation}
\end{proposition}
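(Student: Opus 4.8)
The plan is to prove the identity \eqref{test} by establishing the two inequalities separately, treating the right-hand side as $\tfrac12$ times a supremum that we bound above and then show is attained. First I would obtain the upper bound: for any test function $f\colon [n]\to\mathbb{R}$ with $\max_j |f(j)|\le 1$,
\[
\sum_{j=1}^n f(j)P(j) - \sum_{j=1}^n f(j)Q(j) = \sum_{j=1}^n f(j)\bigl(P(j)-Q(j)\bigr) \le \sum_{j=1}^n |f(j)|\,|P(j)-Q(j)| \le \sum_{j=1}^n |P(j)-Q(j)| = 2\|P-Q\|_{\TV},
\]
where the final equality is the definition \eqref{def:dTV}. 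Taking the supremum over all admissible $f$ shows that the right-hand side of \eqref{test} is at most $\|P-Q\|_{\TV}$.

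For the reverse inequality I would exhibit an explicit maximizer. Let $B = \{\, j \in [n] : P(j)\ge Q(j)\,\}$ and define $f^\star(j) = 1$ for $j\in B$ and $f^\star(j) = -1$ for $j\notin B$. Then $\max_j |f^\star(j)| = 1$, so $f^\star$ is admissible, and by construction $f^\star(j)\bigl(P(j)-Q(j)\bigr) = |P(j)-Q(j)|$ for every $j$. Summing gives $\sum_{j=1}^n f^\star(j)\bigl(P(j)-Q(j)\bigr) = \sum_{j=1}^n |P(j)-Q(j)| = 2\|P-Q\|_{\TV}$, so the supremum in \eqref{test} is at least $\|P-Q\|_{\TV}$. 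Combining the two bounds yields the claimed identity.

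There is no genuinely hard step here; this is the standard finite-state duality argument for total variation distance. The only point meriting a word of care is that for indices $j$ with $P(j)=Q(j)$ the value of $f^\star(j)$ is immaterial (any value in $[-1,1]$ contributes $0$), so the supremum is in fact \emph{attained}, not merely approached. One could alternatively phrase the lower bound via the Hahn--Jordan decomposition of the signed measure $P-Q$, but on the finite set $[n]$ the sign-function construction above is the most transparent, and it is all that is needed for the applications in Section~\ref{conv}.
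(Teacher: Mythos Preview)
Your argument is correct and is the standard duality proof for total variation on a finite state space. Note, however, that the paper does not supply its own proof of this proposition: it is stated with attribution to Levin and Peres \cite{levin2017markov} and then used as a black box, so there is nothing in the paper to compare against. Your write-up would serve perfectly well as a self-contained justification, and the only cosmetic remark is that your upper-bound chain already handles the case $P(j)=Q(j)$ automatically, so the closing caveat about those indices is optional.
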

We are now ready to state the main theorem of this section.
\begin{theorem}\label{thm:tvd}For any $m \leq n$ and $p \in [0,1]$, the following inequalities hold:
$$
\|Q_{m,n,p} - {\rm Uni}_n\|_{\TV} \le \frac{m-1}{(n+1)(n-m+1)},
$$
and
    $$
        \|Q_{m,n,p} - {\rm Uni}_n\|_{\TV} \ge \frac{|2p-1|}{4n} \left| 1 - \frac{e^{n+1}(m-1)!\, \PP(Z\le m-1)}{(n+1)^{m-1}} \right|,
    $$
    where $Z$ obeys a Poisson distribution with parameter $\lambda = n+1$.
 Additionally, for $p=1/2$, we have
$$
\|Q_{m,n,1/2} - {\rm Uni}_n\|_{\TV} \ge \frac{1}{2}\left| \frac{n^{m-2}}{2(n+1)^{m-1}} - \frac{1}{2(n+1)}\left[1 + \frac{n-2m+2}{n(n-m+1)}\right] \right|.
$$
\end{theorem}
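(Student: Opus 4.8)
The plan is to prove the three assertions in order, leaning on the already-established formulas. For the \emph{upper bound}, I would start from the expression for $Q_{m,n,p}(j)=\PP(a_m=j\mid\alpha\in\PF{m,n})$ in Theorem \ref{component}, subtract $1/n$, and bound $\tfrac12\sum_{j=1}^n|Q_{m,n,p}(j)-1/n|$ directly. By Proposition \ref{prop:generalboundstv} it suffices to treat $p=1$, where $Q_{m,n,1}(j)-\tfrac1n$ equals the constant term $\tfrac{n-m+2}{(n-m+1)(n+1)}-\tfrac1n$ minus $\tfrac{1}{(n+1)^{m-1}}S_{n-j+1}$, writing $S_t=\sum_{s=t}^{m-1}\binom{m-1}{s}(n-s)^{m-s-2}(s+1)^{s-1}$. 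The tail sums $S_{n-j+1}$ are monotone in $j$, and summing over $j$ telescopes against the Abel identity \eqref{1} (which gave $A_{m-1}(1,\dots,1;-1,\dots,-1)=(n-m+2)(n+1)^{m-2}$ in the proof of Theorem \ref{component}). The key point is that, because the deviation $Q_{m,n,1}(j)-1/n$ changes sign exactly once as $j$ increases (the constant part being positive and the subtracted tail decreasing from a positive value to $0$), the $\ell^1$ norm collapses to twice the value at the crossover, so $\|Q_{m,n,1}-\unif n\|_{\TV}=\sum_{j\le j_0}(Q_{m,n,1}(j)-1/n)$ for the appropriate $j_0$; estimating this partial sum and using $S_{n-j+1}\ge 0$ yields the clean bound $\tfrac{m-1}{(n+1)(n-m+1)}$.

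For the \emph{general lower bound}, I would apply the variational formula of Proposition \ref{prop:altdTV} with the specific test function $f(j)=\tfrac{1}{n}(2j-n-1)$, which satisfies $\max_j|f(j)|\le 1$. Then $\tfrac12\big(\sum_j f(j)Q_{m,n,p}(j)-\sum_j f(j)\,\unif n(j)\big)=\tfrac1n\big(\mathbb{E}(a_m\mid\alpha\in\PF{m,n})-\tfrac{n+1}{2}\big)$ since $\sum_j f(j)/n=0$. Plugging in the exact mean from Theorem \ref{mean} gives precisely $\tfrac1n\cdot\big(-(p-\tfrac12)\big)\tfrac{e^{n+1}(m-1)!\,\PP(Z\le m-1)}{(n+1)^{m-1}}$ plus $\tfrac{1}{n}(p-\tfrac12)$, i.e. $\tfrac{2p-1}{2n}\big(1-\tfrac{e^{n+1}(m-1)!\,\PP(Z\le m-1)}{(n+1)^{m-1}}\big)$ up to sign; taking absolute values (and using that the supremum is at least this and at least its negative) gives the stated $\tfrac{|2p-1|}{4n}|1-\cdots|$, with the factor $4$ coming from the $\tfrac12$ in \eqref{test} combined with the $\tfrac12$ in $p-\tfrac12$.

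For the \emph{$p=1/2$ refinement}, the mean-based test function above gives $0$, so I need a different lower-bounding functional. Here I would use a single-coordinate indicator: by \eqref{def:dTV}, $\|Q_{m,n,1/2}-\unif n\|_{\TV}\ge \tfrac12|Q_{m,n,1/2}(j)-1/n|$ for any fixed $j$, and the natural choice is $j=n$ (or $j=1$), where Theorem \ref{component} with $p=1/2$ collapses the two tail sums and leaves only the $s=m-1$ term, namely $\tfrac12\binom{m-1}{m-1}(n-m+1)^{-1}m^{m-2}$ — wait, more carefully, at $j=n$ the first sum runs over $s\ge 1$ and the second over $s\ge n$, so for $m\le n$ the second sum is empty and the first is $\tfrac12$ times the full sum minus the $s=0$ term $(n)^{m-2}$. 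Using the Abel evaluation again, the full sum equals $(n-m+2)(n+1)^{m-2}/(n-m+1)\cdot(\text{normalization})$; after subtracting the $s=0$ term $n^{m-2}$ one obtains $Q_{m,n,1/2}(n)=\tfrac{1}{2(n+1)}\big[1+\tfrac{n-2m+2}{n(n-m+1)}\big]-\tfrac{n^{m-2}}{2(n+1)^{m-1}}$ after simplification, and the claimed inequality follows by taking half the absolute value of $Q_{m,n,1/2}(n)-\tfrac1n$; the constant $\tfrac{n-m+2}{(n-m+1)(n+1)}-\tfrac1n = \tfrac{m-1}{n(n+1)(n-m+1)}$ gets absorbed into the bracket.

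The main obstacle I anticipate is bookkeeping in the $p=1/2$ case: pinning down exactly which terms survive in the two truncated sums of Theorem \ref{component} at the boundary value $j=n$, and then matching the resulting expression — after invoking the Abel identity \eqref{1} to evaluate $\sum_{s=0}^{m-1}\binom{m-1}{s}(n-s)^{m-s-2}(s+1)^{s-1}$ and peeling off the $s=0$ contribution $n^{m-2}$ — to the precise algebraic form stated, including verifying the simplification $\tfrac{n-m+2}{(n-m+1)(n+1)}-\tfrac1n=\tfrac{m-1}{n(n+1)(n-m+1)}$ so that the constant piece is correctly folded into $\tfrac{1}{2(n+1)}[1+\tfrac{n-2m+2}{n(n-m+1)}]$. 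The upper bound's sign-change argument also needs a short justification that $Q_{m,n,1}(j)-1/n$ really is unimodal/single-signed in the relevant sense, which I would get from monotonicity of the tail sum $S_{n-j+1}$ in $j$.
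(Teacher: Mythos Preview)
Your proposal is correct and follows the same three-part strategy as the paper, with only minor variations worth flagging. For the general lower bound the paper first reduces to $p=1$ via Proposition~\ref{prop:generalboundstv} and then uses the test function $f(j)=j/n$; your centered choice $f(j)=(2j-n-1)/n$ applied directly at arbitrary $p$ is equivalent and in fact yields the constant $\tfrac{|2p-1|}{2n}$ rather than $\tfrac{|2p-1|}{4n}$---your closing arithmetic drops a factor of~$2$, but this only strengthens the stated inequality. For the $p=1/2$ case the paper evaluates $Q_{m,n,1/2}(1)$ rather than $Q_{m,n,1/2}(n)$; by the symmetry in Remark~\ref{rmk:symmetry} these coincide, and your anticipated bookkeeping issues are real (your displayed formula for $Q_{m,n,1/2}(n)$ has a sign flipped) but harmless once inside the absolute value. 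The one genuine methodological difference is the upper bound: the paper does \emph{not} invoke a sign-change argument but simply applies the triangle inequality $|C'-T_j|\le C'+T_j$ termwise, with $C'=\tfrac{m-1}{n(n+1)(n-m+1)}$ and $T_j=S_{n-j+1}/(n+1)^{m-1}$, sums the constant part to $\tfrac{m-1}{2(n+1)(n-m+1)}$, and then evaluates $\sum_j T_j$ exactly via the Abel identities \eqref{1} and \eqref{2} to obtain an identical second contribution. Your monotonicity route---bounding the positive partial sum by $j_0\cdot C'\le n\cdot C'$ after discarding the nonnegative $T_j$---also works and avoids the Abel computation, at the cost of needing the (easy) single-sign-change observation.
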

Before presenting the proof of Theorem \ref{thm:tvd}, we discuss an important and interesting consequence of the theorem, which we summarize in the following result.
\begin{proposition}\label{cor:mcn} Take $m=cn$ for some $0<c<1$. Then for all $p \in [0, 1]$,
    $$
    \|Q_{m,n,p} - {\rm Uni}_n\|_{\TV} = \Theta\left( \frac{1}{n}\right).
    $$
\end{proposition}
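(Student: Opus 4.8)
The plan is to sandwich $\|Q_{m,n,p}-{\rm Uni}_n\|_{\TV}$ between two quantities of order $1/n$, using the three estimates recorded in Theorem~\ref{thm:tvd}; throughout, $c\in(0,1)$ is fixed, $m=cn$, and all asymptotics are as $n\to\infty$, with implied constants depending on $c$ and $p$ but not on $n$. For the upper bound I would simply substitute $m=cn$ into the first inequality of Theorem~\ref{thm:tvd}:
\[
\|Q_{m,n,p}-{\rm Uni}_n\|_{\TV}\le\frac{m-1}{(n+1)(n-m+1)}=\frac{cn-1}{(n+1)\big((1-c)n+1\big)}\sim\frac{c}{(1-c)\,n},
\]
which is $O(1/n)$ for every $p\in[0,1]$. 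The content of the proposition therefore lies entirely in the matching lower bound.

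For $p\neq 1/2$ I would invoke the second inequality of Theorem~\ref{thm:tvd} together with the asymptotic expansion already obtained in the proof of Proposition~\ref{cor-asymp}. Combining \eqref{substitute} with \eqref{cf} gives
\[
\frac{e^{n+1}(m-1)!\,\PP(Z\le m-1)}{(n+1)^{m-1}}=\frac{A_{m-1}(n-m+1,1;0,0)}{(n+1)^{m-1}}=\frac{1}{1-c}\Big(1+O(1/n)\Big),
\]
so the quantity inside the absolute value converges to $1-\tfrac{1}{1-c}=-\tfrac{c}{1-c}\neq 0$. Hence for all large $n$ it exceeds $\tfrac{c}{2(1-c)}$ in magnitude, and the second estimate yields $\|Q_{m,n,p}-{\rm Uni}_n\|_{\TV}\ge\tfrac{|2p-1|\,c}{8(1-c)\,n}=\Omega(1/n)$.

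For $p=1/2$ the second estimate is vacuous, so I would use the third inequality of Theorem~\ref{thm:tvd}. Pulling $\tfrac{1}{2(n+1)}$ out of the two terms rewrites that bound as $\tfrac{1}{4(n+1)}\big|(\tfrac{n}{n+1})^{m-2}-1-\tfrac{n-2m+2}{n(n-m+1)}\big|$. An elementary expansion gives $(\tfrac{n}{n+1})^{m-2}=\exp\!\big((cn-2)\log(1-\tfrac1{n+1})\big)=e^{-c}+O(1/n)$ and $\tfrac{n-2m+2}{n(n-m+1)}=O(1/n)$, so the bracket tends to $e^{-c}-1\neq 0$; thus $\|Q_{m,n,1/2}-{\rm Uni}_n\|_{\TV}\ge\tfrac{1-e^{-c}}{8(n+1)}$ for $n$ large, again $\Omega(1/n)$. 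Combining the three cases gives $\|Q_{m,n,p}-{\rm Uni}_n\|_{\TV}=\Theta(1/n)$.

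I do not expect a serious obstacle here: the argument is bookkeeping on top of the expansion \eqref{cf} from Proposition~\ref{cor-asymp} and the expansion of $(n/(n+1))^{m-2}$. The one point that needs a word of justification is that the limiting constants $\tfrac{c}{1-c}$ (for $p\neq 1/2$) and $1-e^{-c}$ (for $p=1/2$) are bounded away from $0$, which holds precisely because $c$ is a fixed element of $(0,1)$; and, should one want an inequality valid for every $n$ rather than merely asymptotically, one notes that the finitely many small-$n$ terms are each a fixed positive number and so cannot affect the $\Theta$ conclusion.
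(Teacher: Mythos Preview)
Your proposal is correct and follows essentially the same route as the paper: the upper bound comes directly from the first inequality of Theorem~\ref{thm:tvd}, the lower bound for $p\neq 1/2$ from the second inequality together with the expansion \eqref{cf} of Proposition~\ref{cor-asymp}, and the lower bound for $p=1/2$ from the third inequality after expanding $(n/(n+1))^{m-2}\to e^{-c}$. The paper's proof is organized identically, so there is nothing substantive to add.
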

Proposition \ref{cor:mcn} depicts the striking effect of the ``extra spots'' on the rate of convergence of $Q_{m, n, p}$ to the uniform distribution ${\rm Uni}_n$. In \cite[Theorem 6]{DHHRY}, the authors establish that when $m=n$ and $p\neq 1/2$, the correct order of the rate of convergence is~$1/\sqrt{n}$, while in Proposition \ref{cor:mcn}, this rate of convergence is shown to be $1/n$. 
Our result thus illustrates the strong impact of having fewer cars than spots ($m<n$) on the preference distribution. Even for small $\varepsilon=1-c$, the presence of an $\varepsilon$ proportion of extra spots changes the order of the rate of converge to the uniform distribution completely when $p \neq 1/2$. 
The histograms in Figure \ref{fig:hist} provide an illustration of this discussion.

Notice that in the case of $m<n$, the parking preference distribution of the last car (left plot in Figure \ref{fig:hist}) is quite close to the uniform distribution over $[n]$. Whereas, in the case of $m=n$, more mass is placed at the head of the distribution (right plot in Figure \ref{fig:hist}). This is expected since, as explained above, the convergence of $Q_{m, n, p}$ to the uniform distribution ${\rm Uni}_n$ in the generic situation $p\neq 1/2$ is faster when there are more spots than cars (convergence rate decreases from $1/n$ to $1/\sqrt{n}$).

\begin{figure}[h!]
  \centering
  \begin{subfigure}{0.45\textwidth}
    \centering
    \includegraphics[width=\textwidth]{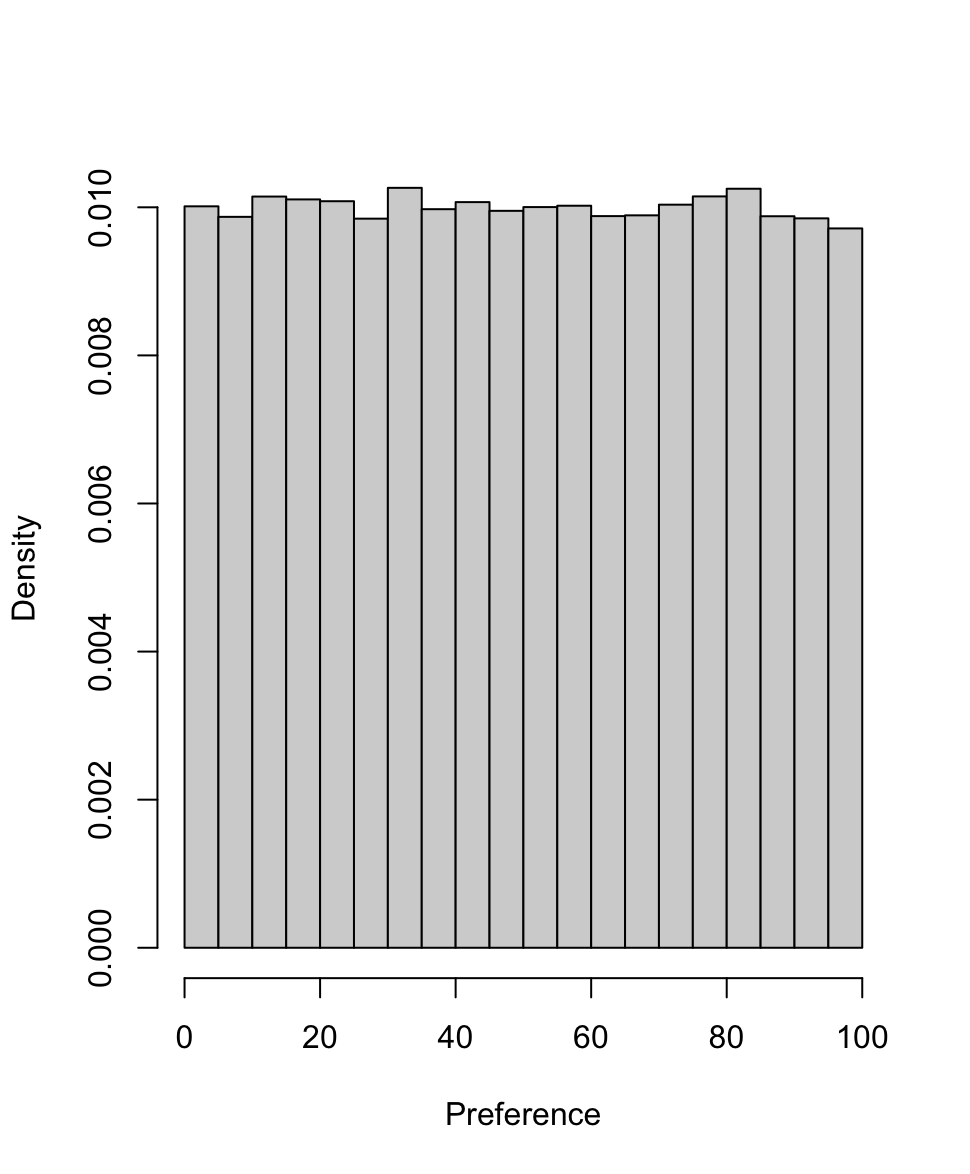}
    \caption{$n=100, m=20$ and $p=0.7$}
    \label{fig:figure1}
  \end{subfigure}
  \hfill
  \begin{subfigure}{0.45\textwidth}
    \centering
    \includegraphics[width=\textwidth]{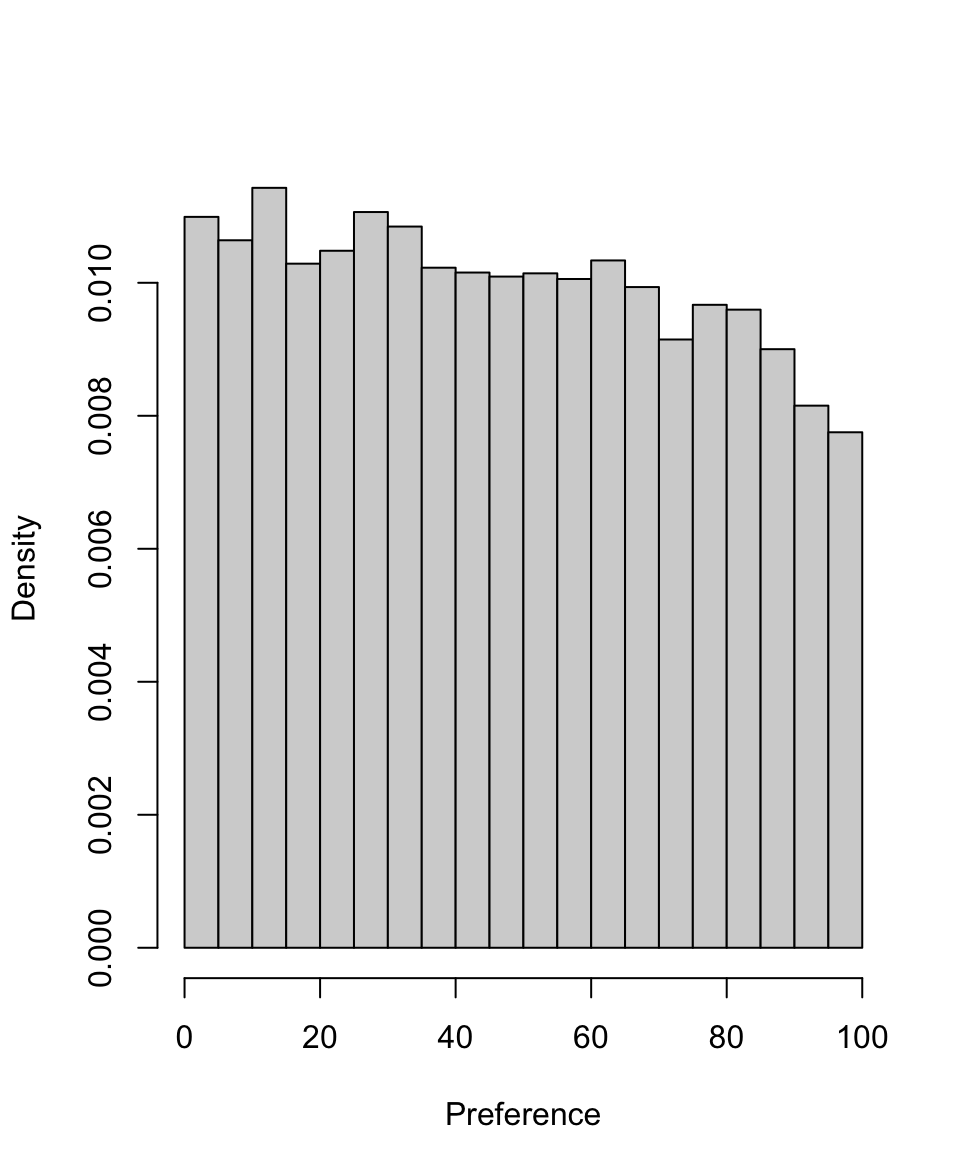}
    \caption{$n = m = 100$ and $p=0.7$}
    \label{fig:figure2}
  \end{subfigure}
  \caption{The conditional distribution $Q_{m,n,p}$ of $a_m$ (parking preference of the last car) in $100,000$ samples of preference lists of size $n=100$ chosen uniformly at random. Here the forward-moving probability $p=0.7$. The left plot is for $m=20$ cars and the right plot is for $m=100$ cars.}
  \label{fig:hist}
\end{figure}

We proceed to show how Proposition \ref{cor:mcn} follows from Theorem \ref{thm:tvd}.
\begin{proof}[Proof of Proposition \ref{cor:mcn}] Notice that  by Theorem \ref{thm:tvd}, with $m = cn$, we have that for all $p\in [0,1]$,
\begin{equation}\label{eq:up}
    \|Q_{m,n,p} - {\rm Uni}_n\|_{\TV} \le \frac{cn-1}{(n+1)[(1-c)n+1]} \sim \frac{c}{(1-c)n}.
\end{equation}
This allows us to conclude that our upper bound for the total variation distance is asymptotic to $c/\left((1-c)n\right)$.

    For the lower bound, for $p\neq 1/2$, Theorem \ref{thm:tvd} gives
    \begin{equation}
        \|Q_{m,n,p} - {\rm Uni}_n\|_{\TV}  \ge \frac{|2p-1|}{4n} \left| 1 - \frac{e^{n+1}(cn-1)! \, \PP(Z\le cn-1)}{(n+1)^{cn-1}} \right|,
    \end{equation}
where $Z$ is a Poisson random variable with parameter $\lambda = n+1$, or equivalently, $Z$ is the sum of $n+1$ independent Poisson random variables with parameter $\lambda =1$. Thus, by the first approach in the proof of Proposition \ref{cor-asymp} (cf. in particular \eqref{cf}), we have  that
\begin{equation}
 \|Q_{m,n,p} - {\rm Uni}_n\|_{\TV} \gtrsim \frac{|2p-1|}{4n} \left| 1-\frac{1}{1-c} \right|,
\end{equation}
which guarantees a lower bound asymptotic to $\left(|2p-1|c\right)/\left(4(1-c)n\right)$ for the total variation distance when $p\neq 1/2$. 

Lastly, for the case $p=1/2$, we use the second part of Theorem \ref{thm:tvd}, which gives that 
\begin{equation}
    \begin{split}
        \|Q_{m,n,1/2} - {\rm Uni}_n\|_{\TV}  & \ge \frac{1}{2}\left| \frac{n^{cn-2}}{2(n+1)^{cn-1}} - \frac{1}{2(n+1)}\left[1 + \frac{(1-2c)n+2}{n[(1-c)n+1]}\right] \right|.
    \end{split}
\end{equation}
Notice that $n^{cn-2}/\left(2(n+1)^{cn-1}\right) \sim e^{-c}/(2n)$, whereas $\left((1-2c)n+2\right)/\left(n((1-c)n+1)\right) \sim (1-2c)/\left((1-c)n\right)$, which combined ensures a lower bound of the desired asymptotic order for the total variation distance when $p=1/2$. 
\end{proof}

We now present the proof of Theorem \ref{thm:tvd}.

\begin{proof}[Proof of Theorem \ref{thm:tvd}]
We establish the lower bound first and then the upper bound.\\

\noindent \underline{Lower bound, $p\neq 1/2$.} By Propositions \ref{prop:generalboundstv} and \ref{prop:altdTV}, it is enough to find a suitable lower bound for the case $p=1$ using test functions. Let $f$ be a function over $[n]$ defined as $f(j) = j/n$. Then by Proposition \ref{prop:altdTV},
$$
    \|Q_{m,n,1} - {\rm Uni}_n\|_{\TV}  \ge \frac{1}{2}\sum_{j=1}^n\left[\frac{j}{n}Q_{m,n,1}(j) - \frac{j}{n^2}\right] =  \frac{\mathbb{E}(a_m~|~\alpha \in \PF{m, n})}{2n} - \frac{n+1}{4n} ,
$$
where the conditional expectation $\mathbb{E}$ is under parking protocol with $p=1$. 
By taking $g(j)= -j/n$ and applying Proposition \ref{prop:altdTV} again, we further conclude that
\begin{align}
    \|Q_{m,n,1} - {\rm Uni}_n\|_{\TV} & \ge  \left|\frac{n+1}{4n} - \frac{\mathbb{E}(a_m~|~\alpha \in \PF{m, n})}{2n} \right|\label{eq:lbdtv}.
\end{align}
By Theorem \ref{mean} with $p=1$, we have that 
\begin{equation}\label{eq-poisson}
    \frac{\mathbb{E}(a_m~|~\alpha \in \PF{m, n})}{2n} = \frac{n+2}{4n}-\frac{e^{n+1}(m-1)! \, \PP(Z\le m-1)}{4n(n+1)^{m-1}},
\end{equation}
where $Z$ is a Poisson random variable with $\lambda = n+1$. 
Substituting \eqref{eq-poisson} into \eqref{eq:lbdtv}, we obtain
\begin{equation*}
    \begin{split}
        \|Q_{m,n,1} - {\rm Uni}_n\|_{\TV} & \ge \left| \frac{1}{4}+\frac{1}{4n}  - \frac{1}{4} - \frac{1}{2n} + \frac{e^{n+1}(m-1)! \, \PP(Z\le m-1)}{4n(n+1)^{m-1}} \right| \\
        & = \left| \frac{e^{n+1}(m-1)! \, \PP(Z\le m-1)}{4n(n+1)^{m-1}} - \frac{1}{4n}\right|.
    \end{split}
\end{equation*}
Finally, Proposition \ref{prop:generalboundstv} implies that 
$$
\|Q_{m,n,p} - {\rm Uni}_n\|_{\TV} \ge |2p-1|\|Q_{m,n,1} - {\rm Uni}_n\|_{\TV},
$$
which shows the desired lower bound. 
\\

\noindent \underline{Lower bound, $p=1/2$.} Notice that the previous lower bound is $0$ when $p=1/2$. For this reason, we handle the case $p=1/2$ separately. We adopt a direct approach. By the definition of the total variation distance given in \eqref{def:dTV}, we have that
\begin{equation}\label{eq:lb1}
    \|Q_{m,n,1/2} - {\rm Uni}_n\|_{\TV} \ge \frac{1}{2}\left|Q_{m,n,1/2}(1) - \frac{1}{n}\right|.
\end{equation}
On the other hand, by Theorem \ref{component} with $j=1$, we have that
\begin{align}
        Q_{m,n,1/2}(1) &  =\frac{n-m+2}{(n-m+1)(n+1)}-\frac{1}{2(n+1)^{m-1}} \cdot \sum_{s=1}^{m-1} \binom{m-1}{s} (n-s)^{m-s-2} (s+1)^{s-1}  \notag\\
        & = \frac{n-m+2}{(n-m+1)(n+1)}-\frac{A_{m-1}(n-m+1,1;-1,-1)}{2(n+1)^{m-1}} + \frac{n^{m-2}}{2(n+1)^{m-1}},\label{here}
    \end{align}
where in the second equality we add and subtract $\frac{1}{2(n+1)^{m-1}} \binom{m-1}{0}n^{m-2} 1^{-1}$ on the right-hand side. Now recall that, by \eqref{1},
\begin{align}\label{subin}
A_{m-1}(n-m+1,1;-1,-1) = \frac{n-m+2}{n-m+1}(n+1)^{m-2}.    
\end{align}
Substituting \eqref{subin} into \eqref{here} we obtain
\begin{equation}\label{mid}
    Q_{m,n,1/2}(1) = \frac{n-m+2}{(n-m+1)(n+1)} - \frac{(n-m+2)}{2(n+1)(n-m+1)} + \frac{n^{m-2}}{2(n+1)^{m-1}}.
\end{equation} 
Substituting \eqref{mid} into the right-hand side of \eqref{eq:lb1} gives us 
\begin{align*}
        \|Q_{m,n,1/2} - {\rm Uni}_n\|_{\TV} & \ge \frac{1}{2}\left|\frac{n-m+2}{(n-m+1)(n+1)} - \frac{(n-m+2)}{2(n+1)(n-m+1)} + \frac{n^{m-2}}{2(n+1)^{m-1}} - \frac{1}{n}  \right|\\
        & = \frac{1}{2}\left| \frac{n^{m-2}}{2(n+1)^{m-1}} - \frac{1}{2(n+1)}\left[1 + \frac{n-2m+2}{n(n-m+1)}\right] \right|,
    \end{align*}
which concludes this part of the proof.\\

\noindent \underline{Upper bound.} By Proposition \ref{prop:generalboundstv}, it is enough to consider the case $p=1$. By the definition of the total variation distance given in \eqref{def:dTV}, we have that
\begin{equation}\label{eq:dtv}
    \begin{split}
            \|Q_{m,n,1} - {\rm Uni}_n\|_{\TV} & = \frac{1}{2}\sum_{j=1}^n\left|Q_{m,n,1}(j) - \frac{1}{n}\right|.
    \end{split}
\end{equation}
By Theorem \ref{component} with $p=1$, we have that for any $j \in [n]$,
\begin{equation}\label{usethisone}
    \begin{split}
        Q_{m,n,1}(j) &= \frac{n-m+2}{(n-m+1)(n+1)}-\frac{1}{(n+1)^{m-1}} \Big[\sum_{s=n-j+1}^{m-1} \binom{m-1}{s} (n-s)^{m-s-2} (s+1)^{s-1} \Big].
    \end{split}
\end{equation}
Using the fact that
$$
\frac{n-m+2}{(n-m+1)(n+1)} - \frac{1}{n} = \frac{1}{(n-m+1)(n+1)} - \frac{1}{(n+1)n}
$$ 
along with \eqref{usethisone}, we have that \eqref{eq:dtv} satisfies
\begin{align}
        \|Q_{m,n,1} - {\rm Uni}_n\|_{\TV} & = \frac{1}{2}\sum_{j=1}^n\Big| \frac{1}{(n-m+1)(n+1)} - \frac{1}{(n+1)n}\notag \\
        & \qquad \qquad -\frac{1}{(n+1)^{m-1}} \Big[\sum_{s=n-j+1}^{m-1} \binom{m-1}{s} (n-s)^{m-s-2} (s+1)^{s-1} \Big]\Big| \notag\\
        & \hspace{-.5in} \le \frac{m-1}{2(n+1)(n-m+1)} \notag\\
        & \hspace{-.5in}\qquad \qquad + \frac{1}{2(n+1)^{m-1}}\sum_{j=1}^n\sum_{s=n-j+1}^{m-1} \binom{m-1}{s} (n-s)^{m-s-2} (s+1)^{s-1}.\label{eq:dtvupper}
    \end{align}
Now, interchanging the summands in \eqref{eq:dtvupper} and applying \eqref{1} and \eqref{2} gives us
\begin{align}
        &\sum_{j=1}^n\sum_{s=n-j+1}^{m-1} \binom{m-1}{s} (n-s)^{m-s-2} (s+1)^{s-1}\notag\\
        &\hspace{1in} = \sum_{s=0}^{m-1}\binom{m-1}{s} (n-s)^{m-s-2} (s+1)^{s-1}s\notag\\
        &\hspace{1in} = A_{m-1}(n-m+1,1;-1,0)-A_{m-1}(n-m+1,1;-1,-1)\notag\\
        &\hspace{1in} = \frac{(n+1)^{m-1}}{n-m+1}-\frac{(n-m+2)(n+1)^{m-2}}{n-m+1}\notag\\
        &\hspace{1in}=\frac{(m-1)(n+1)^{m-2}}{n-m+1},\label{eq:last}.
    \end{align}
Substituting \eqref{eq:last} into \eqref{eq:dtvupper} yields
\begin{equation}
        \|Q_{m,n,1} - {\rm Uni}_n\|_{\TV}  \le \frac{m-1}{2(n+1)(n-m+1)} + \frac{m-1}{2(n+1)(n-m+1)} =\frac{m-1}{(n+1)(n-m+1)},
\end{equation}
 which is enough to conclude the proof.
\end{proof}

Proposition \ref{cor:mcn} characterizes the convergence of $Q_{cn,n,p}$ to the uniform distribution ${\rm Uni}_n$ when the number of cars, $m$, is a proportion of the number of spots, $n$. In what follows, we analyze the behavior of $\|Q_{cn,n,p} - {\rm Uni}_n\|_{\TV}$ as a function of $c$, when $n$ and $p$ are treated as fixed parameters. More specifically, we analyze how $c$ may speed up or slow down the convergence of $Q_{cn,n,p}$ to the uniform distribution ${\rm Uni}_n$.

\begin{figure}[!ht]
    \centering
    \includegraphics[width=0.5\linewidth]{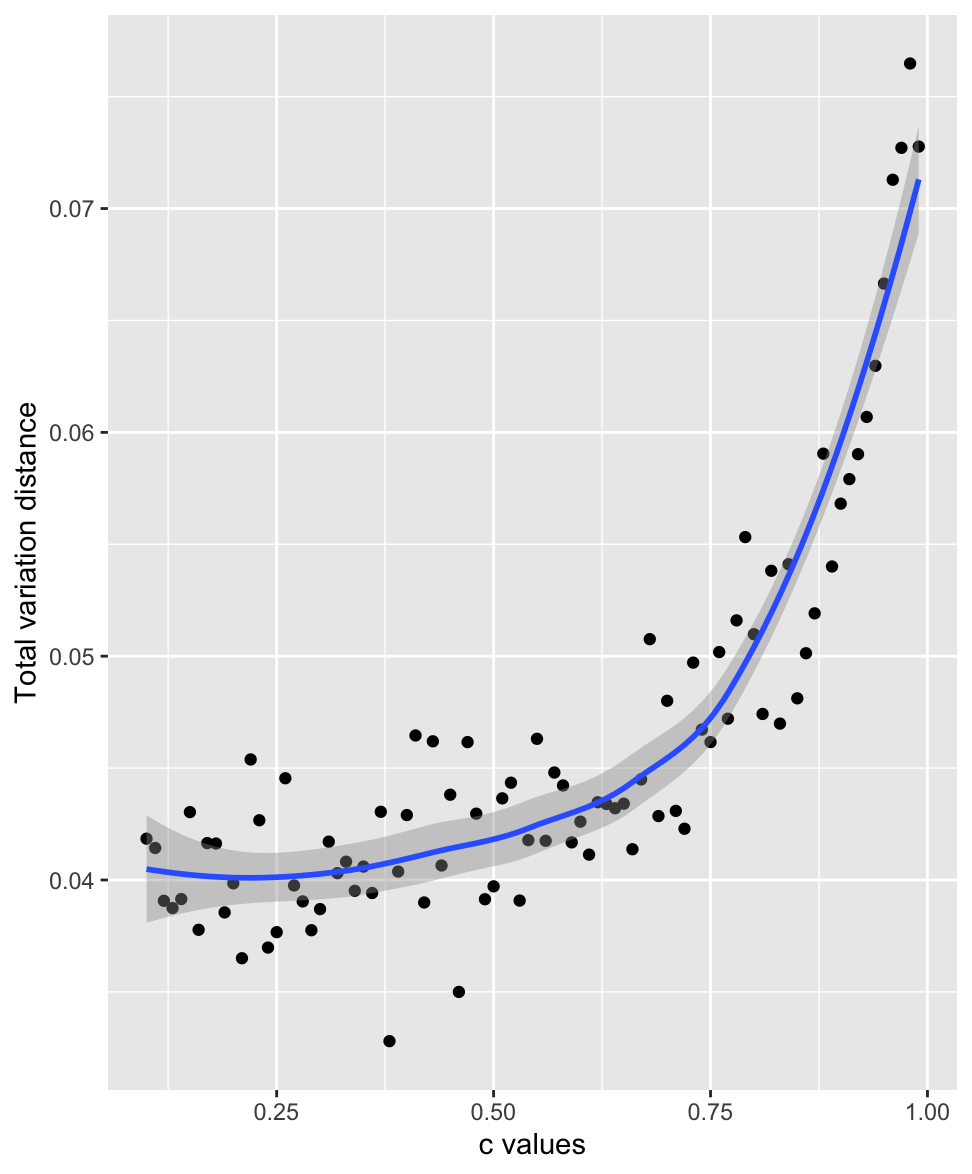}
    \caption{The total variation distance $\|Q_{cn,n,p} - {\rm Uni}_n\|_{\TV}$ as a function of $c$. Here $n=100$, $m=cn$, $p=1$, and $c$ ranges from $0.1$ to $0.99$.}
    \label{fig:enter-label}
\end{figure}
\begin{proposition}\label{prop:c} Take $m = cn$ for some $0<c<1$. Then, 
$$
    \sup_{n\ge 1}~\sup_{p\in [0,1]}\|Q_{cn,n,p} - {\rm Uni}_n\|_{\TV} \le 1-(1-c)e^{3c/5}.
$$
\end{proposition}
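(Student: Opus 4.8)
\emph{Plan.} The idea is to derive the claimed bound directly from the upper bound already proved in Theorem~\ref{thm:tvd}, so that no new estimate of the total variation distance is needed; the only work is to control, uniformly in $n$, an explicit rational function of $n$. By the first inequality of Theorem~\ref{thm:tvd}, for every $n$ and every $p\in[0,1]$,
\[
 \|Q_{cn,n,p} - {\rm Uni}_n\|_{\TV}\ \le\ \frac{cn-1}{(n+1)\bigl((1-c)n+1\bigr)},
\]
and the right-hand side, call it $u_n$, is independent of $p$. Hence it suffices to prove $u_n \le 1-(1-c)e^{3c/5}$ for every admissible $n$ (i.e.\ every $n$ with $cn\in\mathbb{N}$; when $cn=1$ one has $u_n=0$ and $Q_{1,n,p}={\rm Uni}_n$, so that case is trivial). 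I would do this by sandwiching $u_n$ between a rational function of $c$ alone and $1-(1-c)e^{3c/5}$.

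First I would show $u_n < \dfrac{2c}{5-3c}$ for \emph{all} $n\ge 1$. Since $5-3c>0$ and $(n+1)\bigl((1-c)n+1\bigr)>0$, clearing denominators reduces this to
\[
 2c(n+1)\bigl((1-c)n+1\bigr)-(5-3c)(cn-1)>0 .
\]
Expanding and simplifying, the left-hand side equals $c(1-c)\,n(2n-1)+(5-c)$, a sum of two strictly positive quantities for $c\in(0,1)$ and $n\ge 1$, so the inequality holds. I expect this to be the step requiring the most care: one has to guess the right intermediate quantity $\tfrac{2c}{5-3c}$, and the exponent $3/5$ in the statement is exactly calibrated so that this quantity simultaneously dominates $u_n$ through the clean polynomial identity above and is dominated by $1-(1-c)e^{3c/5}$ in the next step.

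Finally I would compare $\tfrac{2c}{5-3c}$ with $1-(1-c)e^{3c/5}$. Applying $e^x\ge 1+x$ at $x=-\tfrac{3c}{5}$ gives $e^{3c/5}\le\bigl(1-\tfrac{3c}{5}\bigr)^{-1}=\tfrac{5}{5-3c}$ (legitimate since $1-\tfrac{3c}{5}>0$ for $c<1$), hence $(1-c)e^{3c/5}\le\tfrac{5(1-c)}{5-3c}$ and
\[
 1-(1-c)e^{3c/5}\ \ge\ 1-\frac{5(1-c)}{5-3c}\ =\ \frac{2c}{5-3c}.
\]
Chaining the three displays yields $\|Q_{cn,n,p}-{\rm Uni}_n\|_{\TV}\le u_n<\tfrac{2c}{5-3c}\le 1-(1-c)e^{3c/5}$ for all $n$ and $p$, and taking the supremum over $n$ and $p$ finishes the argument. (An alternative route — combining Theorem~\ref{thm:tvd} and the $p=1$ reduction of Proposition~\ref{prop:generalboundstv} with the bound $\|Q_{m,n,1}-{\rm Uni}_n\|_{\TV}\le 1-(n/(n+1))^{m-1}$, which follows from monotonicity of $Q_{m,n,1}$ and $Q_{m,n,1}(n)=n^{m-2}/(n+1)^{m-1}$, via the weighted mean inequality $\max(a,b)\ge a^{3/5}b^{2/5}$ — also works, but it is longer and ultimately unnecessary.)
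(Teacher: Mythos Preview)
Your argument is correct, but it is not the route the paper takes. The paper does \emph{not} invoke the upper bound of Theorem~\ref{thm:tvd} at all. Instead it reduces to $p=1$ via Proposition~\ref{prop:generalboundstv}, then uses a probabilistic decomposition: writing $A=\{\alpha\in\PF{m,n}\}$ and using that $a_m$ is unconditionally uniform on $[n]$, one gets $Q_{m,n,1}(j)-\tfrac1n=(1-\PP(A))\bigl(Q_{m,n,1}(j)-\PP(a_m=j\mid A^c)\bigr)$, whence $\|Q_{m,n,1}-\unif{n}\|_{\TV}\le 1-\PP(A)$. The bound then follows from $\PP(A)=\frac{(1-c)n+1}{n+1}\,(1+\tfrac1n)^{cn}\ge(1-c)e^{3c/5}$, the last step using $n\log(1+\tfrac1n)\ge\log 2>3/5$ for all $n\ge1$; this is exactly what the paper's remark about ``a lower bound of $n\log(1+1/n)$'' refers to.

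By contrast, you recycle the sharper $O(1/n)$ bound $u_n=\frac{cn-1}{(n+1)((1-c)n+1)}$ from Theorem~\ref{thm:tvd} and show by a clean polynomial identity that $u_n<\frac{2c}{5-3c}\le 1-(1-c)e^{3c/5}$; the algebra (including the use of $e^{-x}\ge 1-x$) checks out. Your route is more elementary and in fact yields the strictly tighter uniform bound $\frac{2c}{5-3c}$, whereas the paper's route is conceptually illuminating: it shows that the shape of the bound $1-(1-c)e^{3c/5}$ comes from the parking probability $\PP(A)$ itself, not from any delicate estimate on the conditional distribution. Both are valid; your approach depends on having Theorem~\ref{thm:tvd} already in hand, while the paper's is self-contained once one knows $\PP(A)$.
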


\begin{remark}
Here the coefficient $3/5$ may be further relaxed. All we need is a lower bound of $n \log(1+1/n)$ for all $n \geq 1$.
\end{remark}

\begin{remark}
For readers who are interested in investigating the dependence of $\|Q_{cn,n,p} -  \unif{n}\|_{\TV}$ as a function of $c$, we refer to our $R$ package on GitHub: \url{https://github.com/rbribeiro/parking-functions/}.
This $R$ package provides a set of functions to simulate and analyze parking behavior under varying probabilistic preferences.
\end{remark}

On the surface, Proposition \ref{prop:c} above may seem like a weaker version of Proposition \ref{cor:mcn}, however, the two propositions are different in spirit. Proposition \ref{prop:c} gives an upper bound on the function $c \mapsto \|Q_{cn,n,p} - {\rm Uni}_n\|_{\TV}$, regardless of the values of $n$ and $p$. In words, Proposition \ref{prop:c} states that, as a function of $c$, the total variation distance decreases to zero at least linearly in $c$, when $c$ goes to zero.
\begin{proof}[Proof of Proposition \ref{prop:c}]
By Proposition \ref{prop:generalboundstv}, it is enough to establish the result for the particular case $p=1$. For notational convenience, we write $\PP( \; \cdot \;)$ as a shorthand for $\PP(\; \cdot \; | \; \alpha \in [n]^m)$ and let $A$ be the following event
\begin{equation*}
    A \coloneqq  \{\alpha \in \PF{m,n}\}.
\end{equation*} 
Since $a_m$ is uniformly distributed over $[n]$ under $\PP$, we have that for any $j \in [n]$,
\begin{equation}
    \begin{split}
        \frac{1}{n} = \PP(a_m = j) & = \PP(a_m = j \; |\; A)\PP(A) + \PP(a_m = j \; |\; A^c)\PP(A^c)\\
        & = Q_{m,n,1}(j)\PP(A) + \PP(a_m = j \; |\; A^c) (1-\PP(A)),
    \end{split}
\end{equation} 
which implies that for any $j\in [n]$,
\begin{equation}\label{eq:end}
    \begin{split}
      Q_{m,n,1}(j) - \frac{1}{n} & = Q_{m,n,1}(j)(1-\PP(A)) - \PP(a_m = j \; |\; A^c) (1-\PP(A))\\
      & = \left[Q_{m,n,1}(j) - \PP(a_m = j \; |\; A^c) \right](1-\PP(A)).
      \end{split}
\end{equation}
Let $\widetilde{Q}_{m,n,1}(\; \cdot \;)$ be $\PP(a_m = \cdot \; |\; A^c)$ and recall the definition of the total variation distance given in \eqref{def:dTV}. By \eqref{eq:end}, we have that
\begin{equation}
    \|Q_{m,n,1} - \unif{n} \|_{\TV} = (1-\PP(A))  \|Q_{m,n,1} - \widetilde{Q}_{m,n,1} \|_{\TV}.
\end{equation}
On the other hand, by Theorem \ref{thm:old}, we have that
$$
\PP(A) = \frac{(n-m+1)(n+1)^{m-1}}{n^m} = \frac{(1-c)n+1}{n+1}\left( 1 + \frac{1}{n}\right)^{cn} \ge  (1-c)e^{3c/5},
$$
which proves the result, since $\|Q_{m,n,1} - \widetilde{Q}_{m,n,1} \|_{\TV} \le 1$.
\end{proof}

\end{document}